\numberwithin{equation}{section}
\newtheorem{theorem}[equation]{Theorem}
\newtheorem{proposition}[equation]{Proposition}
\newtheorem{lemma}[equation]{Lemma}
\newtheorem{corollary}[equation]{Corollary}
\theoremstyle{definition}
\newtheorem{remark}[equation]{Remark}
\newtheorem{example}[equation]{Example}
\renewcommand{\(}{\bigl(}
\renewcommand{\)}{\bigr)}
\newcommand{\tens}{\otimes}
\newcommand{\inv}{^{-1}}
\newcommand{\id}{\mathrm{id}}
\newcommand{\CH}{\operatorname{CH}}
\renewcommand{\Im}{\operatorname{Im}}
\newcommand{\Coker}{\operatorname{Coker}}
\newcommand{\ch}{\operatorname{char}}
\newcommand{\Spec}{\operatorname{Spec}}
\newcommand{\gSL}{\operatorname{\mathbf{SL}}}
\newcommand{\gPGL}{\operatorname{\mathbf{PGL}}}
\newcommand{\End}{\operatorname{End}}
\newcommand{\Hom}{\operatorname{Hom}}
\newcommand{\Nrd}{\operatorname{Nrd}}
\newcommand{\ra}{\rightarrow}
\newcommand{\xra}{\xrightarrow}
\newcommand{\A}{\mathbb{A}}
\renewcommand{\P}{\mathbb{P}}
\newcommand{\Z}{\mathbb{Z}}
\newcommand{\R}{\mathbb{R}}
\newcommand{\gm}{\mathbb{G}_m}
\newcommand{\cD}{\mathcal D}
\newcommand{\cX}{\mathcal X}
\newcommand{\cT}{\mathcal T}
\newcommand{\SB}{\operatorname{S\!\hspace{0.25ex}B}\nolimits}
\newcommand{\Ch}{\mathop{\mathrm{Ch}}\nolimits}
\newcommand{\F}{\mathbb{F}}
\newcommand{\cR}{\mathcal R}
\newcommand{\Char}{\mathop{\mathrm{char}}\nolimits}
\newcommand{\corr}{\rightsquigarrow}
\newcommand{\pr}{\operatorname{\mathit{pr}}}
\title
[Motivic decomposition] % colontitle
{Motivic decomposition of compactifications \\
of certain group varieties}
\keywords
{Central simple algebras,
special linear groups,
principle homogeneous spaces,
compactifications,
Chow groups and motives.
{\em Mathematical Subject Classification (2010):}
20G15; 14C25}
\author
{Nikita A. Karpenko}
\address
{Mathematical \& Statistical Sciences \\
University of Alberta \\
Edmonton
%, Alberta
\\
CANADA}
\email
{karpenko {\it at} ualberta.ca}
\author
{Alexander S. Merkurjev}
\address
{Department of Mathematics\\
University of California\\
Los Angeles\\
CA\\
%90095-1555\\
USA}
\email{merkurev {\it at} math.ucla.edu}
\date
\thanks
{The first author acknowledges a partial support of the French Agence Nationale
de la Recherche (ANR) under reference ANR-12-BL01-0005;
his work has been also supported by the start-up grant of the University of Alberta.
The work of the second author has been supported by the
NSF grant DMS \#1160206 and the Guggenheim Fellowship.}
\begin{document}

\begin{abstract}
%Let $F$ be a field, $p$ a prime integer, $D$ a degree $p$ central simple
%$F$-algebra, and $U$ an $\gSL_1(D)$-torsor.
Let $D$ be a central simple algebra of prime degree over a field
and let $E$ be an $\gSL_1(D)$-torsor.
%For $p=3$, a motivic decomposition of certain compactification
%of $U$ has been produced 7 years ago by N. Semenov.
%(A similar result for $p=2$ was known before as a particular case of M. Rost's motivic decompositions
%of norm quadrics.)
%This paper provides an improved extension to arbitrary $p$.
We determine the complete motivic decomposition of certain compactifications of $E$.
We also compute the Chow ring of $E$.
\end{abstract}

\maketitle

\tableofcontents

\section
{Introduction}

Let $p$ be a prime number.
For any integer $n\geq2$, a {\em Rost motive of degree $n$} is a direct summand $\cR$ of the Chow motive with coefficients in $\Z_{(p)}$ (the localization of the integers at the prime ideal $(p)$)
of a smooth complete geometrically irreducible variety $X$ over a field $F$ such that for any extension field $K/F$ with a closed point on $X_K$ of degree prime to $p$, the motive $\cR_K$ is isomorphic to the direct sum of Tate motives
$$
\Z_{(p)}\oplus \Z_{(p)}(b)\oplus \Z_{(p)}(2b)\oplus \dots\oplus\Z_{(p)}((p-1)b),
$$
where $b=(p^{n-1}-1)/(p-1)$.
The isomorphism class of $\cR$ is determined by $X$, \cite[Proposition 3.4]{snv};
$\cR$ is indecomposable as long as $X$ has no closed points of degree prime to $p$.

A smooth complete geometrically irreducible variety $X$ over $F$
is a \emph{$p$-generic splitting variety} for an element
$s\in H^n_{\acute{e}t}(F, \Z/p\Z(n-1))$,
if $s$ vanishes over a field extension $K/F$ if and only if $X$ has a
closed point of degree prime to $p$ over $K$.
A \emph{norm variety} of $s$ is a $p$-generic splitting variety of dimension $p^{n-1}-1$.

A Rost motive living on a $p$-generic splitting variety of an element $s$ is determined by $s$ up to isomorphism and called the Rost motive of $s$.
In characteristic $0$, any {\em symbol} $s$ admits a norm variety possessing a Rost motive.
This played an important role in the proof of the Bloch-Kato conjecture (see \cite{MR2811603}).
It is interesting to understand the complement to the Rost motive in the motive of a norm variety $X$ for a given $s$;
this complement, however, depends on $X$ and is not determined by $s$ anymore.

For $p=2$, there are nice norm varieties known as norm quadrics.
Their complete motivic decomposition is a classical result due to M. Rost.
A norm quadric $X$ can be viewed as a compactification of the affine quadric $U$ given by $\pi=c$, where $\pi$ is a quadratic ($n-1$)-fold Pfister form
%(for some $n\geq2$)
and $c\in F^\times$.
The summands of the complete motivic decomposition of $X$ are given by the degree $n$ Rost motive of $X$ and shifts of the degree $n-1$
Rost motive of the projective Pfister quadric $\pi=0$. It is proved in \cite[Theorem A.4]{MR1836000} that $\CH(U)=\Z$. In the present paper we extend these results
to arbitrary prime $p$ (and $n=3$).

For arbitrary $p$, there are nice norm varieties in small degrees.
For $n=2$, these are the Severi-Brauer varieties of degree $p$ central simple $F$-algebras.
Any of them admits a degree $2$ Rost motive which is simply the total motive of the variety.

The first interesting situation occurs in degree $n=3$.
 Let $D$ be a degree $p$ central division $F$-algebra, $G=\gSL_1(D)$ the special linear group of $D$, and $E$ a principle homogeneous space under $G$.
 The affine variety $E$ is given by the equation $\Nrd=c$, where $\Nrd$ is the reduced norm of $D$ and $c\in F^\times$.
 Any smooth compactification of $E$ is a norm variety of the element
 $s:=[D]\cup(c)\in H_{\acute{e}t}^3(F,\Z/p\Z(2))$.
It has been shown by N. Semenov in \cite{MR2400993} for $p=3$ (and $\Char F=0$) that the motive of a certain smooth equivariant compactification of $E$ decomposes in a direct sum, where one of the summands is the Rost motive  of $s$, another summand is a motive $\varepsilon$ vanishing over any field extension of $F$ splitting $D$, and each of the remaining summands is a shift of the motive of the Severi-Brauer variety of $D$.
All these summands (but $\varepsilon$) are indecomposable and $\varepsilon$ was expected to be $0$.

Another proof of this result (covering arbitrary characteristic) has been provided in \cite{MR2393083} along with the claim that $\varepsilon=0$, but the proof of the claim was incomplete.

In the present paper we prove the following main result (see {Theorem \ref{cor1}}):

%\vspace{-0.8em}

\begin{theorem}\label{really main}
Let $F$ be a field, $D$ a central division $F$-algebra of prime degree $p$, $X$ a smooth compactification of an $\gSL_1(D)$-torsor, and $M(X)$ its Chow motive with $\Z_{(p)}$-coefficients.
Assume that $M(X)$ over the function field of the Severi-Brauer variety $S$ of $D$ is isomorphic to a direct sum of Tate motives.
Then $M(X)$ (over $F$) is isomorphic to the direct sum of the Rost motive of $X$ and several shifts of $M(S)$.
This is the unique decomposition of $M(X)$ into a direct sum of indecomposable motives.
\end{theorem}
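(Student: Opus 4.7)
My plan has three stages. \emph{First}, I would extract the Rost motive of $X$ as a direct summand: since $X$ is a norm variety for the symbol $s = [D]\cup(c) \in H^3_{\acute{e}t}(F, \Z/p\Z(2))$, the existence theory of Rost motives provides a direct summand $\cR \subseteq M(X)$, and I write $M(X) = \cR \oplus N$. The remaining task is to decompose $N$ as a sum of shifts of $M(S)$.

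\emph{Second}, I would pin down the candidate shifts via a Poincar\'e-polynomial calculation over $F(S)$. Since $F(S)$ splits $D$ and hence $s$, one has $\cR_{F(S)} \cong \bigoplus_{i=0}^{p-1} \Z_{(p)}(i(p+1))$. By hypothesis $M(X)_{F(S)}$ is a sum of Tate motives, so $N_{F(S)}$ is too; subtracting the Rost contribution, and using Poincar\'e self-duality of $X$ (which has dimension $p^2-1$), yields a symmetric multiset of Tate shifts $\{c_k\}$ for $N_{F(S)}$ that must partition into blocks of $p$ consecutive integers $\{a_j, a_j+1, \ldots, a_j + p - 1\}$, one per prospective summand $M(S)(a_j)$. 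For each such block I then need to construct an idempotent $\pi_j \in \End(M(X))$ with image $M(S)(a_j)$, using correspondences on products $X \times \SB_k(D)$ built from the explicit geometry of the affine piece $E = \{\Nrd = c\}$, the structure of the chosen compactification, and known intersection-theoretic properties of generalized Severi-Brauer varieties. The mutual orthogonality and completeness of these $\pi_j$ together with the Rost idempotent --- equivalently, the vanishing of the ``ghost'' motive $\varepsilon$ alluded to in the introduction --- would be established via a nilpotence principle for $X$, which is available precisely because $M(X)_{F(S)}$ is Tate (so that the kernel of $\End(M(X)) \to \End(M(X)_{F(S)})$ is a nil ideal).

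\emph{Uniqueness} of the decomposition into indecomposables then follows from the Krull--Schmidt theorem: $\End(\cR)$ and $\End(M(S))$ are local $\Z_{(p)}$-algebras (the latter because $D$ is division of prime degree $p$, so $M(S)$ is indecomposable), and the relevant $\Hom$ groups between $\cR$ and shifts of $M(S)$, and between differently-shifted copies of $M(S)$, vanish after comparing Tate ranks over $F(S)$. The main obstacle is the middle step: producing correspondences that realize each projection $\pi_j$ is geometrically nontrivial, and verifying that no residual summand $\varepsilon$ survives requires the careful nilpotence argument whose incomplete treatment in \cite{MR2393083} is precisely what the present theorem fixes.
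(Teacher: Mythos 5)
Your overall skeleton (Rost motive plus shifts of $M(S)$, with uniqueness via Krull--Schmidt) matches the statement, and your uniqueness discussion is essentially what the paper does. But the heart of your plan --- extracting the Rost summand and then ``constructing idempotents $\pi_j$ realizing shifts of $M(S)$'' --- is left at the level of intent, and that is precisely where the paper's argument lives. A few concrete problems.

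First, your Step~1 appeals to ``the existence theory of Rost motives'' to write $M(X)=\cR\oplus N$. That theory requires characteristic~$0$ (or substantial extra work), whereas the statement is over an arbitrary field; more importantly, the paper does \emph{not} simply invoke it. It constructs the split summand $R=\bigoplus_{j=0}^{p-1}\Z_{(p)}(j(p+1))$ by hand from a lift $\bar h\in\CH^{p+1}(X)$ of $\partial_G(q_G)$, and the fact that $R$ actually splits off hinges on Rost's theorem (Theorem~\ref{power}: $\partial_G(q_G)^{p-1}\neq 0$). Only at the very end does it identify the remaining summand over $F$ as the Rost motive, after the rest of the decomposition is in hand. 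Your plan has the logic inverted and misses the need for the nonvanishing of $h^{p-1}$.

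Second and more seriously, your Step~2 is exactly the ``incomplete treatment'' you note in \cite{MR2393083}: knowing that $N_{F(S)}$ is a symmetric multiset of Tate twists does not tell you that the twists group into $p$-consecutive blocks coming from shifts of $M(S)$ over $F$, nor does the nilpotence principle produce such a grouping. Nilpotence lets you lift idempotents and isomorphisms from $F(S)$, but any lifted idempotent could \emph{a priori} cut out an indecomposable summand that is not a shift of $M(S)$ --- this is the $\varepsilon$ phenomenon, and hand-waving toward ``explicit geometry of $\{\Nrd=c\}$ and correspondences on $X\times\SB_k(D)$'' does not close it. The paper's decisive idea, absent from your plan, is the category of $D$-motives: one shows $N^D=0$ (Proposition~\ref{isom}, again via $\partial_G(q_G)^{p-1}\neq 0$), deduces from Proposition~\ref{dirsum} that $N$ is a direct summand of $N\otimes M(S)$, identifies $N\otimes M(S)$ with a summand of $M(X\times S)$ which by Proposition~\ref{smooth Y} is a sum of shifts of $M(S)$, and then applies Krull--Schmidt and indecomposability of $M(S)$. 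This is a structural argument that never needs the explicit idempotents your plan calls for. Without some substitute for the $D$-motive vanishing $N^D=0$, your proposal does not rule out $\varepsilon\neq 0$.
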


We note that the compactification in \cite{MR2400993} (for $p=3$) has the property required in Theorem \ref{really main} (see Example \ref{semenovex}).

In Section \ref{compact} we show that the condition that $M(X)$ is split over $F(S)$ is satisfied for all smooth $G\times G$-equivariant compactifications of $G=\gSL_1(D)$.
Moreover, we prove that the motive $M(X)$ is split for all smooth equivariant compactifications $X$ of split semisimple groups (see Theorem \ref{equivsplit}).

 We also compute the Chow ring of $G$ in arbitrary characteristic as well as the Chow ring of $E$ in characteristic $0$ (see Theorem \ref{chow2} and Corollary \ref{cor2}):

 \begin{theorem}\label{chow1}
Let $D$ be a central division algebra of prime degree $p$ and $G=\gSL_1(D)$.

\noindent {\rm 1)} There is an element $h\in\CH^{p+1}(G)$ such  that
  \newcommand{\xqedhere}[2]{%
  \rlap{\hbox to#1{\hfil\llap{\ensuremath{#2}}}}}
\[
\CH(G)=\Z\cdot 1\oplus (\Z/p\Z)h\oplus (\Z/p\Z)h^2\oplus\cdots\oplus (\Z/p\Z)h^{p-1}.
\]

\noindent {\rm 2)} Let $E$ be a nonsplit $G$-torsor. If $\Char F=0$, then $\CH(E)=\Z$.
\end{theorem}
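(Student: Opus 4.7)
The plan is to deduce both parts of Theorem~\ref{chow1} from the motivic decomposition provided by Theorem~\ref{really main}, combined with the localization sequence for an open immersion.

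For Part~1, view $G$ as the trivial $G$-torsor and choose a smooth $G\times G$-equivariant compactification $X$ of $G$. As indicated in the introduction, any such $X$ satisfies the hypothesis that $M(X)$ splits over $F(S)$, so Theorem~\ref{really main} yields
\[
M(X) \;\cong\; \cR \,\oplus\, \bigoplus_j M(S)(d_j),
\]
where $\cR$ is the Rost motive of $X$. Since the associated symbol $s=[D]\cup(1)=0$ vanishes already over $F$, the motive $\cR$ splits over $F$ as $\bigoplus_{i=0}^{p-1}\Z_{(p)}(i(p+1))$. Consequently $\CH^{*}(X)_{(p)}$ is a free $\Z_{(p)}$-module with generators coming from these Tate summands together with shifted contributions $\CH^{*-d_j}(S)_{(p)}$ from the Severi-Brauer variety.

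Now apply the localization sequence to the open immersion $G\hookrightarrow X$ with closed complement $Y=X\setminus G$:
\[
\CH^{*}(Y) \xrightarrow{i_*} \CH^{*}(X) \xrightarrow{j^*} \CH^{*}(G) \to 0.
\]
The claim is that the image of $i_*$ contains all of the Severi-Brauer shifts $\CH^{*-d_j}(S)$ together with the $p$-multiples of the Rost-motive Tate generators in positive degree. After quotienting, the generator of $\CH^{i(p+1)}(X)$ coming from $\Z_{(p)}(i(p+1))$ (for $i\geq 1$) becomes a class of exact order $p$, yielding the required $h^i\in\CH^{i(p+1)}(G)$; the ring structure then follows from the grading and the list of surviving generators. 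To promote the $p$-local statement to an integral one, take a degree-$p$ splitting field $L/F$ of $D$ and use the standard restriction--transfer argument: $\operatorname{cor}_{L/F}\circ\res_{L/F}$ equals multiplication by $p$, which is invertible in $\Z_{(\ell)}$ for each prime $\ell\neq p$, and $\CH^{>0}(G_L)=\CH^{>0}(\SL_p)=0$, so $\CH^{>0}(G)_{(\ell)}=0$ as well.

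For Part~2, run the same argument starting from a non-split $G$-torsor $E$. In characteristic $0$, Hironaka furnishes a smooth compactification $X$ of $E$, and one can arrange $X$ so that $M(X)$ splits over $F(S)$: since $E_{F(S)}$ is a trivial $\SL_p$-torsor, base-changing to $F(S)$ reduces the splitting assertion to the situation covered by Theorem~\ref{equivsplit}. The decomposition of Theorem~\ref{really main} has the same shape as in Part~1, but now the Rost motive $\cR$ corresponds to the nonzero symbol $s=[D]\cup(c)$ and does not split over $F$. Running the same localization, the positive-degree classes of $\CH^{*}(X)_{(p)}$ coming from both the Rost and the Severi-Brauer summands are entirely absorbed by the image of $\CH^{*}(Y)\to\CH^{*}(X)$, leaving only $\Z_{(p)}$ in degree $0$. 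The same restriction--transfer trick handles primes $\ell\neq p$, giving $\CH(E)=\Z$.

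The main obstacle is identifying precisely which classes of $\CH^{*}(X)$ lie in the image of $i_*\colon\CH^{*}(Y)\to\CH^{*}(X)$. This requires an explicit understanding of the boundary divisors of the compactification and of how their fundamental classes, together with cycles supported on them, map into the motivic summands of the decomposition. In particular, one must exhibit boundary cycles whose pushforwards are exactly $p$ times the Rost-motive generators in Part~1 (thereby producing the $\Z/p\Z$ torsion), and in Part~2 show that in the non-split case these same pushforwards hit the generators themselves, forcing the full vanishing. The contrast between the two parts ultimately stems from the fact that the Rost motive splits over $F$ precisely when the symbol $s$ does, which separates the trivial torsor from the non-split ones.
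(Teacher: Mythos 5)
Your plan captures the right high-level idea (use the motivic decomposition of a suitable smooth compactification), but the central step you flag as ``the main obstacle'' --- identifying the image of $i_*\colon\CH(Y)\to\CH(X)$ inside the motivic decomposition --- is precisely the difficulty the paper's argument is engineered to avoid, and you have not supplied a way to resolve it. The paper does not run the localization sequence on ordinary Chow groups at all. Instead it introduces the cycle module $K^D_*$ and the resulting $D$-Chow groups $\CH_D$; by Lemma~\ref{infinity} the whole boundary $Y=X\setminus G$ has residue fields splitting $D$, so $\CH_D(Y)=0$ and the restriction $\CH_D(X)\to\CH_D(G)$ is an isomorphism (Lemma~\ref{restriction}), while for the group itself Corollary~\ref{isog} shows $\CH^i(G)\to\CH_D^i(G)$ is an isomorphism for $i>0$. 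Thus the boundary contribution is killed \emph{before} one decomposes anything, and the $D$-motivic isomorphism $M^D(X)\simeq R^D$ (Proposition~\ref{isom}) together with the diagonal decomposition of Proposition~\ref{diagonal} gives $\CH_D(G)$ directly. Trying instead to track which ordinary cycles are supported on the boundary would require you to essentially reconstruct this machinery; as written, your proof has a hole at exactly the point where the hard content lives.

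Two further points. First, you do not account for the ring structure in Part~1: the paper produces the class $h=\partial_G(q_G)$ from the BGQ spectral sequence and obtains the multiplicative relations from the diagonal identity $c\Delta_G=\sum h^i\times h^{p-1-i}$ together with Rost's nonvanishing $h^{p-1}\neq 0$; a purely additive localization argument would not tell you that the nonzero classes in degrees $i(p+1)$ are genuine powers of a single element. Second, you misattribute the role of the hypothesis $\Char F=0$ in Part~2. The smooth compactification with the required splitting property is obtained by twisting an equivariant compactification (Example~\ref{twist}), not by Hironaka, and this works over any perfect field. The characteristic-zero assumption is instead needed to invoke the computation of $\CH^{>0}(\cR)$ for the Rost motive from the reference \cite{snv} (Appendix~RM, Proposition~SC.21), where one checks that all the listed generators die in $\CH_D(\cR)$. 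Your outline for Part~2 makes no contact with this input and therefore does not actually establish the vanishing.
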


\smallskip
\noindent
{\sc Acknowledgements.}
We thank Michel Brion for teaching us the theory of equivariant compactifications. We also thank
%for communicating to us the proof of Lemma \ref{brion}
Markus Rost and Kirill Zainoulline
%for providing the argument in the proof of Proposition \ref{power}.
for helpful information.

\section{$K$-cohomology}

Let $X$ be a smooth variety over $F$. We write $A^i(X,K_n)$ for the $K$-cohomology groups as defined in \cite{Rost98a}.
In particular, $A^i(X,K_i)$ is the Chow group $\CH^i(X)$ of classes of codimension $i$ algebraic cycles on $X$.

Let $G$ be a simply connected semisimple algebraic group. The group $A^1(G,K_2)$ is additive in $G$, i.e., if $G$ and $G'$ are two simply connected group,
then the projections of $G\times G'$ onto $G$ and $G'$ yield an isomorphism (see \cite[Part II, Proposition 7.6 and Theorem 9.3]{GMS})
\[
A^1(G,K_2)\oplus A^1(G',K_2)\stackrel{\sim}{\longrightarrow} A^1(G\times G',K_2).
\]
The following lemma readily follows.

\begin{lemma}\label{addit}
1) The map
\[
A^1(G,K_2)\to  A^1(G\times G,K_2)=A^1(G,K_2)\oplus A^1(G,K_2)
\]
induced by the product homomorphism $G\times G\to G$ is equal to $(1,1)$.

2) The map $A^1(G,K_2)\to A^1(G,K_2)$ induced
by the morphism $G\to G$, $x\mapsto x\inv$ is equal to $-1$.
\end{lemma}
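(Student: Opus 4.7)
The plan is to use the additivity isomorphism $A^1(G\times G,K_2)=A^1(G,K_2)\oplus A^1(G,K_2)$ from the preceding paragraph, which is induced by the two projections $p_1,p_2\colon G\times G\to G$; the inverse is given by the pair of pullbacks $(i_1^*,i_2^*)$, where $i_1(g)=(g,e)$ and $i_2(g)=(e,g)$ are the ``inclusions of axes'' through the identity $e\in G$. Since $A^1$ is a contravariant functor and $p_j\circ i_j=\id_G$ while $p_j\circ i_k$ for $j\neq k$ is the constant map at $e$ (which factors through $\Spec F$ and therefore induces $0$ on $A^1(-,K_2)$), this pair of inverse maps does indeed identify $A^1(G\times G,K_2)$ with $A^1(G,K_2)^{\oplus 2}$, and a class is recovered from its restrictions to the two axes.

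For part 1), denote the multiplication by $\mu\colon G\times G\to G$. The composition $\mu\circ i_1$ sends $g$ to $g\cdot e=g$ and is therefore the identity of $G$; similarly $\mu\circ i_2=\id_G$. Hence for any $\alpha\in A^1(G,K_2)$ we have $i_1^*\mu^*\alpha=\alpha$ and $i_2^*\mu^*\alpha=\alpha$, so under the decomposition above $\mu^*\alpha$ corresponds to the pair $(\alpha,\alpha)$, which is precisely the map $(1,1)$.

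For part 2), let $\iota\colon G\to G$ be the inversion morphism $x\mapsto x\inv$. Consider the morphism $f:=(\id,\iota)\colon G\to G\times G$. Then $\mu\circ f$ is the constant morphism $x\mapsto x\cdot x\inv=e$, which factors through $\Spec F$ and hence acts as zero on $A^1(-,K_2)$. On the other hand, by part 1) and the identification $A^1(G\times G,K_2)=A^1(G,K_2)\oplus A^1(G,K_2)$ we have $\mu^*\alpha=p_1^*\alpha+p_2^*\alpha$ for $\alpha\in A^1(G,K_2)$, so
\[
0=f^*\mu^*\alpha=(p_1\circ f)^*\alpha+(p_2\circ f)^*\alpha=\id^*\alpha+\iota^*\alpha=\alpha+\iota^*\alpha,
\]
which gives $\iota^*\alpha=-\alpha$ as desired.

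There is no real obstacle here: the only ingredient beyond the formal manipulation of pullbacks is the additivity isomorphism quoted just before the lemma, together with the fact that pullback along a morphism factoring through $\Spec F$ kills $A^1(-,K_2)$. The proof is simply a routine ``primitive element'' computation for a group-like functor.
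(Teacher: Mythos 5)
Your proof is correct and follows essentially the same route as the paper: both parts hinge on identifying the inverse of the additivity isomorphism with the pair of pullbacks along the axis embeddings $g\mapsto(g,e)$ and $g\mapsto(e,g)$, and part 2) is exactly the paper's observation that multiplication precomposed with $g\mapsto(g,g^{-1})$ is the constant map to $e$ and hence kills $A^1(-,K_2)$. You have merely spelled out what the paper leaves to the reader.
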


\begin{proof}
1)
It suffices to note that the isomoprhism
$$
A^1(G\times G',K_2){\stackrel{\sim}{\longrightarrow}}A^1(G,K_2)\oplus A^1(G',K_2)
$$
inverse to the one mentioned above, is given by the pull-backs with respect to the group embeddings
$G,G'\hookrightarrow G\times G'$.

2)
The composition of the embedding of varieties $G\hookrightarrow G\times G$, $g\mapsto (g,g^{-1})$ with the product map
$G\times G\to G$ is trivial.
\end{proof}

If $G$ is an absolutely simple simply connected group, then $A^1(G,K_2)$ is an infinite cyclic group with a canonical
generator $q_G$ (see \cite[Part II, \S 7]{GMS}).

\section{BGQ spectral sequence}

Let $X$ be a smooth variety over $F$.
We consider the Brown-Gersten-Quillen \emph{coniveau spectral sequence}
\begin{equation}\label{bgq}
E_2^{s,t}=A^s(X,K_{-t})\Rightarrow K_{-s-t}(X)
\end{equation}
converging to the $K$-groups of $X$ with the topological filtration
\cite[\S 7, Th. 5.4]{Quillen73}.

\begin{example}\label{split}
Let $G=\gSL_n$. By \cite[\S 2]{Suslin91a}, we have $\CH(G)=\Z$.
It follows that all the differentials of the BGQ
spectral sequence for $G$ coming to the zero diagonal are trivial.
\end{example}

\begin{lemma}[{\cite[Theorem 3.4]{Merkurjev10}}]
\label{div}
If $\delta$ is a nontrivial differential in the spectral sequence (\ref{bgq}) on the $q$-th page $E_q^{*,*}$,
then $\delta$ is of finite order and for every prime divisor $p$ of the order of $\delta$, the integer $p-1$ divides $q-1$.
\end{lemma}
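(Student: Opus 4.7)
The plan is to exploit Adams operations on the BGQ spectral sequence. By work of Soulé and Gillet--Soulé, the operations $\psi^k$ ($k\ge 1$) lift from higher algebraic $K$-theory to operators on every page $E_r^{s,t}$ of the coniveau spectral sequence, and commute with all the differentials. The crucial input is a weight formula: on the subquotient $E_r^{s,t}$, the operator $\psi^k$ acts as multiplication by the scalar $k^{-t}$. This specializes, at $(s,t)=(s,-s)$, to the classical statement that $\psi^k$ acts on $\CH^s(X)=E_\infty^{s,-s}$ by $k^s$, and more generally reflects the fact that $K_{-t}$ carries weight $-t$ for Adams operations.

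Granting this input, take a nontrivial differential $\delta = d_q\colon E_q^{s,t}\to E_q^{s+q,\,t-q+1}$. The target has weight $-(t-q+1)=-t+q-1$, so for any $x\in E_q^{s,t}$ the commutation $\psi^k\circ\delta=\delta\circ\psi^k$ gives
\[
k^{-t+q-1}\,\delta(x) \;=\; \psi^k\bigl(\delta(x)\bigr) \;=\; \delta\bigl(\psi^k(x)\bigr) \;=\; k^{-t}\,\delta(x),
\]
and hence $k^{-t}(k^{q-1}-1)\,\delta(x)=0$. Choosing $k$ coprime to the (potential) order of $\delta(x)$ yields $(k^{q-1}-1)\,\delta(x)=0$; already $k=2$ shows that $\delta$ has finite order. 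Now if a prime $p$ divides the order of $\delta$, pick $x$ with $p$ dividing the order of $\delta(x)$; then $p\mid k^{q-1}-1$ for every $k\in(\Z/p\Z)\m$, i.e.\ $k^{q-1}\equiv 1\pmod p$ for all such $k$. Since the exponent of $(\Z/p\Z)\m$ equals $p-1$, this forces $(p-1)\mid(q-1)$.

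The main obstacle is the Adams-operation input itself. It is nontrivial to establish that $\psi^k$ descends through the BGQ spectral sequence compatibly with differentials, and to identify its eigenvalue on the subquotient $E_r^{s,t}$ as $k^{-t}$: a priori $\psi^k$ behaves cleanly only on the graded pieces for the $\gamma$-filtration, and one must compare the $\gamma$-filtration with the topological (coniveau) filtration. Once this structural result is in place (as in \cite{Merkurjev10}), the arithmetic extraction above is an immediate application of Fermat's little theorem.
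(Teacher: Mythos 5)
The paper does not prove this lemma; it simply cites it as \cite[Theorem 3.4]{Merkurjev10}, whose title is precisely ``Adams operations and the Brown--Gersten--Quillen spectral sequence.'' Your reconstruction is the right one and follows Merkurjev's line of argument: Adams operations act on the coniveau spectral sequence, the eigenvalue on $E_q^{s,t}$ is $k^{-t}$, commutation with $d_q$ forces $k^{-t}(k^{q-1}-1)$ to annihilate the image of $d_q$, and Fermat's little theorem extracts the conclusion. You also correctly flag that the substantive content is the descent of $\psi^k$ through the coniveau filtration with the stated eigenvalue, not the arithmetic at the end.

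One small infelicity in the order of your argument: the sentence ``choosing $k$ coprime to the (potential) order of $\delta(x)$'' precedes the proof that $\delta(x)$ has finite order, so it is not yet meaningful. The clean sequence is: from $k^{-t}(k^{q-1}-1)\,\delta = 0$ with $k=2$, $q\ge 2$, $-t\ge 0$, deduce immediately that $\delta$ has finite order dividing $2^{-t}(2^{q-1}-1)$; then, for a prime $p$ dividing the order of $\delta$, note that $p$ must divide $k^{-t}(k^{q-1}-1)$ for every $k\ge 1$, so taking $k$ coprime to $p$ gives $k^{q-1}\equiv 1 \pmod p$ for all $k\in(\Z/p\Z)^\times$, hence $(p-1)\mid(q-1)$. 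This is just a reordering; the content is correct.
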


Let $p$ be a prime integer, $D$ a central division algebra over $F$ of degree $p$ and $G=\gSL_1(D)$. As $D$ is split by a field
extension of degree $p$, it follows from Example \ref{split} that all Chow groups $\CH^i(G)$ are $p$-periodic for $i>0$ and
the order of every differential in the BGQ
spectral sequence for $G$ coming to the zero diagonal divides $p$.
The edge homomorphism $K_1(G)\to E_2^{0,-1}=A^0(G, K_1)=F^\times$ is a surjection
split by the pull-back with respect to the structure morphism $G\to\Spec F$.
Therefore, all the
differentials starting at $E_*^{0,-1}$ are trivial.

It follows then from Lemma \ref{div} that the
only possibly nontrivial differential coming to the terms $E_q^{i,-i}$ for $q\geq2$ and $i\leq p+1$ is
\[
\partial_G: A^1(G,K_2)=E_p^{1,-2}\to E_p^{p+1,-p-1}=\CH^{p+1}(G).
\]
By \cite[Theorem 6.1]{Suslin91a} (see also \cite[Theorem 5.1]{MR1618404}), $K_0(G)=\Z$, hence the factors
$$
K_0(G)^{(i)}/K_0(G)^{(i+1)}=E_\infty^{i,-i}
$$
of the topological filtration on $K_0(G)$
are trivial for $i>0$. It follows that the map $\partial_G$ is surjective. As the group $A^1(G,K_2)$ is cyclic with the generator $q_G$,
the group $\CH^{p+1}(G)$ is cyclic of order dividing $p$. It is shown in \cite[Theorem 4.2]{Yagunov07} that the differential $\partial_G$
is nontrivial. We have proved the following lemma.

\begin{lemma}\label{p+1}
If $D$ is a central division algebra, then $\CH^{p+1}(G)$ is a cyclic group of order $p$ generated by $\partial_G(q_G)$.
\qed
\end{lemma}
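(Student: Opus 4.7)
My plan is to assemble the proof from the ingredients already gathered in the discussion preceding the statement; the conclusion is essentially the endpoint of a sequence of constraints that pin down $\CH^{p+1}(G)$ from above and from below.

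First I would argue that the only differential in the BGQ spectral sequence that can land nontrivially on the term $E_*^{p+1,-p-1}=\CH^{p+1}(G)$ is $\partial_G$. Since $D$ is split by a degree $p$ extension and $\CH(\gSL_p)=\Z$ (Example \ref{split}), a transfer argument shows that all positive-codimension Chow groups of $G$ are $p$-torsion, and hence every differential hitting the zero diagonal has order dividing $p$. By Lemma \ref{div}, among differentials on pages $q\ge 2$ only those with $p-1\mid q-1$ can be nonzero, so within the relevant range the only candidates into the diagonal come from columns indexed by multiples of $p-1$. Combined with the observation that all differentials starting at $E_*^{0,-1}=F^\times$ vanish (by splitness of the edge map $K_1(G)\to F^\times$), this leaves $\partial_G:A^1(G,K_2)\to\CH^{p+1}(G)$ as the single candidate differential for the codimensions $i\leq p+1$.

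Next I would show that $\partial_G$ is surjective. For this I use Suslin's theorem $K_0(G)=\Z$, which forces the associated graded $E_\infty^{i,-i}=K_0(G)^{(i)}/K_0(G)^{(i+1)}$ of the topological filtration to vanish for $i>0$. In particular $E_\infty^{p+1,-p-1}=0$, and since $\partial_G$ is the only differential capable of killing $\CH^{p+1}(G)$, it must be surjective. Because $A^1(G,K_2)=\Z\cdot q_G$ is cyclic, this proves that $\CH^{p+1}(G)$ is cyclic of order dividing $p$, generated by $\partial_G(q_G)$.

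Finally, to rule out the trivial case I would invoke \cite[Theorem 4.2]{Yagunov07}, which asserts precisely that $\partial_G\neq 0$ for $G=\gSL_1(D)$ with $D$ a central division algebra of prime degree. Combined with the previous paragraph this forces $\CH^{p+1}(G)$ to be cyclic of order exactly $p$, with generator $\partial_G(q_G)$, completing the proof.

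The main obstacle, rather than any internal calculation, is really the external input: the nonvanishing of $\partial_G$ is what distinguishes the division algebra case from the split one and is the only step that cannot be obtained by spectral sequence bookkeeping; without Yagunov's theorem one would only get a cyclic group of order $1$ or $p$. Everything else reduces to combining Lemma \ref{div}, Suslin's computations $\CH(\gSL_p)=\Z$ and $K_0(G)=\Z$, and the standard properties of the edge map.
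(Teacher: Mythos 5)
Your proposal is correct and follows essentially the same route as the paper's own argument: $p$-torsion of $\CH^{>0}(G)$ via transfer, Lemma \ref{div} to isolate the page $q=p$, the split edge map to $F^\times$ to rule out the remaining candidate, Suslin's $K_0(G)=\Z$ to force surjectivity of $\partial_G$, and Yagunov's theorem for nontriviality. Only the phrase ``columns indexed by multiples of $p-1$'' is a slightly imprecise gloss on the page constraint $p-1\mid q-1$, but it does not affect the argument, whose substance matches the paper exactly.
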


\section{Specialization}\label{spec}

Let $A$ be a discrete valuation ring with residue field $F$ and quotient field $L$. Let $\cX$ be a smooth scheme over $A$ and
set $X=\cX\tens_A F$, $X'=\cX\tens_A L$. By \cite[Example 20.3.1]{Fulton84}, there is a \emph{specialization} ring homomorphism
\[
\sigma:\CH^*(X')\to \CH^*(X).
\]

\begin{example}
Let $X$ be a variety over $F$, $L=F(t)$ the rational function field. Consider the valuation ring $A\subset L$ of the parameter $t$
and $\cX=X\tens_F A$. Then $X'=X_L$ and we have a specialization ring homomorphism $\sigma:\CH^*(X_L)\to \CH^*(X)$.
\end{example}

A section of the structure morphism $\cX\to \Spec A$ gives two rational points $x\in X$ and $x'\in X'$. By definition of the specialization,
$\sigma([x'])=[x]$.

Let $F$ be a field of finite characteristic. By \cite[Ch. IX, \S 2, Propositions 5 and 1]{Bourbaki83}, there is a complete
discretely valued field $L$ of characteristic zero with residue field $F$. Let $A$ be the valuation ring and $D$ a central simple algebra
over $F$. By \cite[Theorem 6.1]{Grothendieck68}, there is an Azumaya algebra $\cD$ over $A$ such that
$D\simeq \cD\tens_A F$. The algebra $D'=\cD\tens_A L$ is a central simple algebra over $L$. Then we have a specialization homomorphism
\[
\sigma: \CH^*(\gSL_1(D'))\to \CH^*(\gSL_1(D))
\]
satisfying $\sigma([e'])=[e]$, where $e$ and $e'$ are the identities of the groups.

\section{A source of split motives}

We work in the category of Chow motives over a field $F$, \cite[\S64]{EKM}. We write $M(X)$ for the motive (with integral coefficients) of a smooth
complete variety $X$ over $F$.

A motive is {\em split} if it is isomorphic to a finite direct sum of Tate motives $\Z(a)$ (with arbitrary shifts $a$).
Let $X$ be a smooth proper variety such that the motive $M(X)$ is split, i.e., $M(X)=\coprod_i \Z(a_i)$ for some $a_i$.
The \emph{generating} (\emph{Poincar\' e}) polynomial
$P_X(t)$ of $X$ is defined by
\[
P_X(t)=\sum_i t^{a_i}.
\]
Note that the integer $a_i$ is equal to the rank of the (free abelian) Chow group $\CH^i(X)$.

\begin{example}\label{flags}
Let $G$ be a split semisimple group and $B\subset G$ a Borel subgroup. Then
\[
P_{G/B}(t)=\sum_{w\in W} t^{l(w)},
\]
where $W$ is the Weyl group of $G$ and $l(w)$ is the length of $w$ (see \cite[\S3]{MR0354697}).
\end{example}

\begin{proposition}[{P.~Brosnan, \cite[Theorem 3.3]{Brosnan05}}]
\label{brosnan}
Let $X$ be a smooth projective variety over $F$ equipped
with an action of the multiplicative group $\gm$. Then
\[
M(X) =\coprod_i M(Z_i)(a_i),
\]
where the $Z_i$ are the (smooth) connected components of the subscheme of $X^{\gm}$
of fixed points and $a_i\in\Z$. Moreover, the integer $a_i$ is the dimension
of the positive eigenspace of the action of $\gm$ on the tangent space $\cT_z$
of $X$ at an arbitrary closed point $z\in Z_i$. The dimension of $Z_i$ is the
dimension of $(\cT_z)^{\gm}$.
\end{proposition}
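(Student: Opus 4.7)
The plan is to deduce this from the classical Białynicki-Birula cell decomposition combined with homotopy invariance of Chow motives along affine bundles. First, I would invoke Białynicki-Birula: since $X$ is smooth and projective with a $\gm$-action, the fixed locus $X^{\gm}$ is smooth with connected components $Z_i$, every point $x\in X$ has a well-defined limit $\lim_{t\to 0} t\cdot x$ lying in some $Z_i$, and the attracting sets
\[
X_i^+=\{x\in X:\lim_{t\to 0}t\cdot x\in Z_i\}
\]
partition $X$; the map $X_i^+\to Z_i$ sending $x$ to $\lim_{t\to 0}t\cdot x$ is a Zariski-locally trivial affine bundle of rank $a_i$, where $a_i$ is the dimension of the positive-weight eigenspace of $\gm$ on $\cT_z$ for any closed $z\in Z_i$, while $\dim Z_i=\dim \cT_z^{\gm}$ by the equivariant local structure of the action near a fixed point. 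Ordering the $Z_i$ so that the weights on their normal bundles are monotone, one gets a filtration by closed subvarieties $\emptyset=F_{-1}\subset F_0\subset\cdots\subset F_N=X$ whose successive strata $F_i\setminus F_{i-1}$ are exactly the affine bundles $X_i^+\to Z_i$.

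Second, I would upgrade this stratification to a motivic decomposition by induction on $i$, using the localization sequence in the category of Chow motives together with the fact that an affine bundle $p\colon X_i^+\to Z_i$ induces an isomorphism on Chow groups up to shift by $a_i$. Assuming inductively that $M(F_{i-1})\simeq\coprod_{j<i}M(Z_j)(a_j)$, the closed embedding $Z_i\hookrightarrow F_i$ is regular of codimension $a_i$ (since $Z_i$ is the zero-section of the affine bundle $F_i\setminus F_{i-1}\to Z_i$ extended by $F_{i-1}$), and the Gysin class yields a map $M(Z_i)(a_i)\to M(F_i)$. Together with the restriction $M(F_i)\to M(F_{i-1})$, a diagram chase in the localization triangle — using that $M(F_i\setminus F_{i-1})=M(X_i^+)$ has the same motive as $M(Z_i)(a_i)$ by $\A^{a_i}$-homotopy invariance — gives $M(F_i)\simeq M(F_{i-1})\oplus M(Z_i)(a_i)$.

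The main technical obstacle, and the step that really requires care, is producing mutually orthogonal projectors in $\End M(X)$ that realize the claimed splitting, because the ambient category is Chow motives (not Voevodsky's triangulated category) and because the strata $X_i^+$ are not themselves projective. The clean way is to construct explicit correspondences: for each $i$, the closure of the graph of the contraction $X_i^+\to Z_i$ in $X\times Z_i$ is a cycle of dimension $\dim Z_i+a_i$, hence a correspondence $\alpha_i\colon M(X)\to M(Z_i)(a_i)$; its partner $\beta_i$ comes from the fundamental class of $Z_i\hookrightarrow X$ composed with $p^*$. One then checks that $\beta_i\circ\alpha_j=\delta_{ij}\cdot\id$ and $\sum_i\alpha_i\circ\beta_i=\id_{M(X)}$. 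These identities reduce, via the projection formula and the affine-bundle homotopy invariance, to the corresponding statements on Chow groups, which follow from the Białynicki-Birula cell structure because the classes $[X_i^+]$ already form a $\CH^*$-basis over $\coprod \CH^*(Z_i)$.
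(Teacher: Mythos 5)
The paper does not prove this proposition; it is quoted from Brosnan's \emph{Invent.\ Math.} article (Theorem 3.3 there) and used as a black box, so the comparison must be with Brosnan's published argument rather than with anything internal to the paper. Your overall strategy---Bia{\l}ynicki--Birula stratification into Zariski-locally trivial affine bundles over the fixed components, filtrability of the strata by projectivity, and then correspondences obtained as closures of graphs, verified using Manin's identity principle in the absence of a genuine localization triangle for Chow motives---is indeed the shape of Brosnan's proof, and you correctly identify that the real work is producing projectors, not just a Chow-group isomorphism.

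There is, however, a concrete gap in your construction of $\beta_i$. A morphism $\beta_i\colon M(Z_i)(a_i)\to M(X)$ is by definition a class in $\CH^{\dim Z_i + a_i}(Z_i\times X)$, i.e.\ a cycle of dimension $\dim X - a_i$. The graph of the inclusion $Z_i\hookrightarrow X$ has dimension $\dim Z_i$, and since $\dim X = \dim Z_i + a_i + a_i^-$ (where $a_i^-$ is the dimension of the negative eigenspace), these agree only when $a_i^-=0$; ``composing with $p^*$'' does not change the dimension of a cycle on $Z_i\times X$ and therefore cannot repair this. The cycle of the correct dimension, $\dim Z_i + a_i^-$, is precisely the closure of the \emph{opposite} attracting set $X_i^-=\{x:\lim_{t\to\infty}t\cdot x\in Z_i\}$; that is, $\beta_i$ must be built from the BB decomposition for the inverse $\gm$-action, not from the naked inclusion. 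Once both families of correspondences are in place, a second point you gloss over is that $\alpha_i\circ\beta_j=\delta_{ij}\cdot\id$ is too strong: the closures $\overline{X_i^+}$ and $\overline{X_j^-}$ can meet excessively when $j$ precedes $i$ in the filtration order, so the matrix $(\alpha_i\circ\beta_j)$ is only unipotent triangular with identities on the diagonal. This is still enough --- an invertible upper-triangular system of correspondences can be conjugated to honest mutually orthogonal idempotents --- but that straightening step is where the nilpotence/filtration structure actually gets used and should be spelled out rather than asserted. Finally, over a base field that is not algebraically closed the BB decomposition itself (existence of limits, smoothness of $X^{\gm}$, local triviality of the affine bundles) requires Hesselink's extension; Brosnan is careful about this and you should at least cite it.
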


Let $T$ be a split torus of dimension $n$.
The choice of a $\Z$-basis in the character group $T^*$ allows us to identify $T^*$ with $\Z^n$. We
order $\Z^n$ (and hence $T^*$) lexicographically.

Suppose
$T$ acts on a smooth variety $X$ and let $x\in X$ be an $T$-fixed rational point.
Let $\chi_1, \chi_2,\dots, \chi_m$ be all characters of the representation of $T$ in the tangent space $\cT_x$ of $X$ at $x$.
Write $a_x$ for the number of positive (with respect to the ordering) characters among the $\chi_i$'s.

\begin{corollary}\label{totalsplit}
Let $X$ be a smooth projective variety over $F$ equipped
with an action of a split torus $T$. If the subscheme $X^T$ of $T$-fixed points in $X$ is a
disjoint union of finitely many rational points,
the motive of $X$ is split. Moreover,
\[
P_X(t)= \sum_{x\in X^T} t^{a_x}.
\]
\end{corollary}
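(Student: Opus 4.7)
The plan is to reduce Corollary \ref{totalsplit} to Proposition \ref{brosnan} by letting $\gm$ act on $X$ through a suitably generic cocharacter $\lambda\colon\gm\to T$.

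The first step is to choose $\lambda$ so that two conditions hold simultaneously. Condition (a): $X^{\lambda(\gm)}=X^T$ as closed subschemes of $X$. Condition (b): for every character $\chi\in T^*$ that appears in the representation of $T$ on some tangent space $\cT_x$ with $x\in X^T$, the sign of $\langle\chi,\lambda\rangle\in\Z$ agrees with the sign of $\chi$ in the chosen lexicographic order on $T^*\cong\Z^n$. To achieve (b), I would take $\lambda$ with coordinates $(N^{n-1},N^{n-2},\ldots,N,1)$ for $N$ larger than the absolute values of all entries of the (finitely many) relevant characters; then the pairing $\langle(a_1,\ldots,a_n),\lambda\rangle=a_1N^{n-1}+\cdots+a_n$ takes the sign of its leading nonzero term. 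Condition (a) is then arranged by the standard fact that, on a smooth projective $T$-variety, only finitely many proper subtori of $T$ occur as identity components of point-stabilizers (using Sumihiro linearization and the finiteness of weights in each affine patch), so one avoids them by a generic choice of $\lambda$.

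The second step applies Proposition \ref{brosnan} to the $\gm$-action on $X$ obtained from $\lambda$. By (a), the connected components of $X^{\lambda(\gm)}$ are precisely the rational points $x\in X^T$, each having motive $\Z$, so the proposition gives
\[
M(X)=\coprod_{x\in X^T}\Z(a_x),
\]
which is a split motive; the formula $P_X(t)=\sum_{x\in X^T}t^{a_x}$ follows at once from the definition of the Poincar\'e polynomial.

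The final step is to identify Brosnan's shift $a_x$---the dimension of the positive $\lambda(\gm)$-eigenspace on $\cT_x$---with the integer defined in the Corollary, namely the number of tangent characters at $x$ that are positive in the lex order. By condition (b), a weight $\chi_i$ lies in the positive $\lambda(\gm)$-part of $\cT_x$ iff $\langle\chi_i,\lambda\rangle>0$ iff $\chi_i>0$ in the lex order, so the two counts agree weight by weight. The main obstacle is justifying condition (a): showing that a generic cocharacter has the same fixed-point scheme on $X$ as all of $T$. This is standard for smooth projective torus actions but relies on Sumihiro's linearization theorem to reduce to finitely many weights in a local $T$-equivariant embedding into a representation.
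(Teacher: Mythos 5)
Your proof is correct but takes a different route from the paper. The paper proves the corollary by induction on $\dim T$: write $T=\gm\times T'$ (the $\gm$ being the leading coordinate of the chosen basis of $T^*$), apply Proposition~\ref{brosnan} to this single $\gm$, note that each component $Z_i$ of $X^{\gm}$ is again smooth projective with a $T'$-action and $(X^{\gm})^{T'}=X^T$, and apply the inductive hypothesis to the $Z_i$. The shift bookkeeping works out because a weight is lex-positive in $\Z^n$ exactly when its first coordinate is positive or its first coordinate vanishes and the truncation is lex-positive in $\Z^{n-1}$, which matches adding Brosnan's shift for the $\gm$-step to the inductively obtained shift for the $T'$-step. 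You instead pick a single ``generic'' one-parameter subgroup $\lambda=(N^{n-1},\dots,N,1)$, invoke Sumihiro linearization to argue that for $N\gg0$ one has $X^{\lambda(\gm)}=X^T$ with the sign of $\langle\chi,\lambda\rangle$ matching the lex sign of $\chi$, and apply Brosnan once. Both are standard and both work; the trade-off is that your version is a one-shot application of Brosnan but requires the extra input (Sumihiro's theorem, finiteness of stabilizer subtori, and a mild argument to upgrade set-theoretic to scheme-theoretic equality of fixed loci, e.g.\ via smoothness and reducedness), whereas the paper's induction avoids having to compare $X^{\lambda(\gm)}$ with $X^T$ at all, at the modest cost of running Brosnan $n$ times. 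Your argument for condition (a) and (b) is a bit informal about choosing a single $N$ that works for both, but since both conditions are finitely many open conditions on $N$ this is easily fixed.
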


\begin{proof}
Induction on the dimension of $T$.
\end{proof}

\begin{example}\label{toricex}
Let $T$ be a split torus of dimension $n$ and $X$ a smooth projective toric variety (see \cite{Fulton93}).
Let $\sigma$ be a cone of dimension $n$ in the fan of $X$ and $\{\chi_1, \chi_2,\dots,\chi_n\}$ a
(unique) $\Z$-basis of $T^*$ generating the dual cone $\sigma^\vee$. The standard $T$-invariant
affine open set corresponding to $\sigma$ is $V_\sigma:=\Spec F[\sigma^\vee]$. The map $V_\sigma\to \A^n$,
taking $x$ to $(\chi_1(x),\chi_2(x),\dots,\chi_n(x))$ is a $T$-equivariant isomorphism, where $t\in T$ acts
on the affine space $\A^n$ by componentwise multiplication by $\chi_i(t)$. The only one $T$-equivariant point $x\in V_\sigma$
corresponds to the origin under the isomorphism, so we can identify the tangent space $\cT_x$ with $\A^n$,
and the $\chi_i$'s are the characters of the representation of $T$ in the tangent space $\cT_x$.
Let $a_\sigma$ be the number of positive $\chi_i$'s with respect to a fixed lexicographic order on $T^*$.
Every $T$-fixed point in $X$ belongs to $V_\sigma$ for a unique $\sigma$. It follows that the motive $M(X)$ is split and
\[
P_X(t)=\sum_{\sigma} t^{a_\sigma},
\]
where the sum is taken over all dimension $n$ cones in the fan of $X$.
\end{example}

\section{Compactifications of algebraic groups}\label{compact}

A {\em compactification} of an affine algebraic group $G$ is a projective variety containing $G$ as a dense open subvariety.
A $G\times G$-\emph{equivariant} compactification of $G$ is a projective variety $X$ equipped with an action of $G\times G$ and containing the homogeneous variety
$G=(G\times G)/\operatorname{diag}(G)$
as an open orbit.
Here the group $G\times G$ acts on $G$ by the left-right translations.

Let $G$ be a split semisimple group over $F$. Write $G_{ad}$ for the corresponding adjoint group. The group $G_{ad}$
admits the so-called \emph{wonderful} $G_{ad}\times G_{ad}$-equivariant compactification $\bf X$ (see \cite[\S 6.1]{BK05}).
Let $T\subset G$ be a split maximal torus and $T_{ad}$ the corresponding maximal torus in $G_{ad}$.
The closure $\bf X'$ of $T_{ad}$ in $\bf X$ is a toric $T_{ad}$-variety with fan consisting of all Weyl chambers in $(T_{ad})_*\tens\R=T_*\tens\R$
and their faces.

Let $B$ be a Borel subgroup of $G$ containing $T$ and $B^-$ the opposite Borel subgroup. There is an open $B^-\times B$-invariant subscheme ${\bf X}_0\subset \bf X$ such
that the intersection ${\bf X}'_0:={\bf X}_0\cap  \bf X'$ is the standard open $T_{ad}$-invariant subscheme of the toric variety $\bf X'$ corresponding to the
negative Weyl chamber $\Omega$ that is a cone in the fan of $\bf X'$. Note that the Weyl group $W$ of $G$ acts simply transitively on the set of
all Weyl chambers.

A $G\times G$-equivariant compactification $X$ of $G$ is called \emph{toroidal} if $X$ is normal and the quotient map $G\to G_{ad}$
extends to a morphism $\pi:X\to \bf X$ (see \cite[\S 6.2]{BK05}). The closed subscheme $X':=\pi\inv(\bf X')$ of $X$ is a projective toric $T$-variety.
Note that the diagonal subtorus $\operatorname{diag}(T)\subset T\times T$ acts trivially on $X'$.
The fan of $X'$ is a subdivision of the fan consisting of the Weyl chambers
and their faces. The scheme $X$ is smooth if and only if so is $X'$.

Conversely, if $F$ is a perfect field, given a smooth projective toric $T$-variety with a $W$-invariant fan that is a subdivision of the fan consisting of the Weyl chambers
and their faces, there is a unique smooth $G\times G$-equivariant toroidal compactification $X$ of $G$
with the toric variety $X'$ isomorphic to the given one (see \cite[\S 6.2]{BK05} and \cite[\S 2.3]{Huruguen11}).
By \cite{Brylinski79} and \cite{CTHS05}, such a smooth toric variety exists for every split semisimple group $G$. In other words, the following holds.

\begin{proposition}\label{torexist}
Every split semisimple group $G$ over a perfect field admits a smooth $G\times G$-equivariant toroidal compactification. \qed
\end{proposition}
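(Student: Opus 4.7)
The plan is to reduce the statement via the correspondence stated immediately before it to a purely combinatorial question about fans. By the cited results of \cite{BK05} and \cite{Huruguen11}, over a perfect field there is a bijection between smooth $G\times G$-equivariant toroidal compactifications of $G$ and smooth projective toric $T$-varieties whose fan is $W$-invariant and refines the Weyl chamber fan $\Sigma_{0}$ in $T_{*}\otimes\R$. Hence it suffices to exhibit such a toric variety.

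I would start with $\Sigma_{0}$ itself, which is $W$-invariant by construction because the Weyl group permutes the Weyl chambers simply transitively. The toric variety attached to $\Sigma_{0}$ is projective but generally not smooth, so the task is to refine it $W$-equivariantly while preserving projectivity. For the smoothness part, I would invoke Brylinski's theorem \cite{Brylinski79}: any complete fan in a lattice, together with a finite group of lattice automorphisms preserving it, admits a smooth equivariant refinement obtained by iterated equivariantly-chosen stellar subdivisions. Applied to the $W$-action on $\Sigma_{0}$, this produces a smooth $W$-invariant complete fan refining $\Sigma_{0}$.

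The main obstacle is that a smooth complete toric variety need not be projective, and Brylinski's construction does not control this: the stellar subdivisions that kill singularities can destroy the existence of a strictly convex piecewise-linear support function (which is what projectivity of a smooth complete toric variety amounts to). This is exactly the point addressed in \cite{CTHS05}, where it is shown that any fan admits a smooth, projective, equivariant refinement; a $W$-invariant strictly convex support function is constructed by averaging and further subdividing.

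Combining these two ingredients yields a smooth projective toric $T$-variety whose fan is $W$-invariant and refines $\Sigma_{0}$. Applying the correspondence of \cite[\S 6.2]{BK05} and \cite[\S 2.3]{Huruguen11} (which requires the base field to be perfect, as used in the hypothesis) then produces the desired smooth $G\times G$-equivariant toroidal compactification $X$ of $G$, with $X'$ isomorphic to the chosen toric variety.
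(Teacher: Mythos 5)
Your proof is essentially identical to the paper's: the paper also reduces via the correspondence from \cite[\S 6.2]{BK05} and \cite[\S 2.3]{Huruguen11} (valid over a perfect field) to finding a smooth projective $W$-invariant refinement of the Weyl-chamber fan, and cites \cite{Brylinski79} and \cite{CTHS05} for the existence of such a fan. You have filled in a little more detail on the division of labor between the two references (Brylinski for smoothness via equivariant stellar subdivision, Colliot-Thélène--Harari--Skorobogatov for the projectivity/strictly convex support function issue), which is an accurate gloss.
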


Let $X$ be a smooth $G\times G$-equivariant toroidal compactification of $G$ over $F$. Recall that the toric $T$-variety $X'$ is smooth projective.
Set $X_0:=\pi\inv({\bf X}_0)$ and $X'_0:=\pi\inv({\bf X}'_0)=X'\cap X_0$. Then the $T$-invariant subset $X'_0\subset X'$ is the union of standard open subschemes $V_\sigma$
of $X'$ (see Example \ref{toricex}) corresponding to all cones $\sigma$ in the negative Weyl chamber $\Omega$. The subscheme $(V_\sigma)^T$ reduces to a single rational point
if $\sigma$ is of largest dimension.
In particular, the subscheme $(X'_0)^T$ of $T$-fixed points in $X'_0$ is a disjoint union of $k$ rational points, where $k$ is the number of cones
of maximal dimension in $\Omega$. It follows that $|(X')^T|=k|W|$, the number of all cones of maximal dimension in the fan of $X'$.

Let $U$ and $U^-$ be the unipotent radicals of $B$ and $B^-$ respectively.

\begin{lemma}[{\cite[Proposition 6.2.3]{BK05}}]\label{brion}

\noindent {\rm 1)} Every $G\times G$-orbit in $X$ meets $X'_0$ along a unique $T$-orbit.

\noindent {\rm 2)} The map
\[
U^-\times X'_0 \times U\to X_0,\quad (u,x,v)\mapsto uxv\inv,
\]
is a $T\times T$-equivariant isomorphism.

\noindent {\rm 3)} Every closed $G\times G$-orbit in $X$ is isomorphic to $G/B\times G/B$.
\end{lemma}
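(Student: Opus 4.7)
My plan is to reduce each of the three assertions to the corresponding statement on the wonderful compactification $\mathbf{X}$ of $G_{ad}$ and then transfer it to $X$ via the toroidal morphism $\pi\colon X\to\mathbf{X}$, exploiting the defining equalities $X_0=\pi^{-1}(\mathbf{X}_0)$ and $X'_0=\pi^{-1}(\mathbf{X}'_0)$.

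I would first prove (2) for the wonderful compactification itself, i.e.\ that $U^-\times\mathbf{X}'_0\times U\to\mathbf{X}_0$, $(u,x,v)\mapsto uxv^{-1}$, is a $T\times T$-equivariant isomorphism. This is the classical ``big cell'' description of $\mathbf{X}_0$: the subvariety $\mathbf{X}'_0$ is the $T_{ad}$-fixed slice, $B^-=T_{ad}U^-$ acts from the left and $B=T_{ad}U$ from the right, and the product map is bijective and \'etale at the unique closed-orbit point, hence an isomorphism by $B^-\times B$-equivariance. To transfer to $X$, I would observe that the diagram exhibiting $\mathbf{X}_0$ as $U^-\times\mathbf{X}'_0\times U$ is $G\times G$-compatible with $\pi$, so pulling back along $X'_0\to\mathbf{X}'_0$ and using $\pi^{-1}(\mathbf{X}_0)=X_0$ gives
\[
X_0=\pi^{-1}(\mathbf{X}_0)\;\cong\;U^-\times\pi^{-1}(\mathbf{X}'_0)\times U=U^-\times X'_0\times U,
\]
with the isomorphism given by $(u,x,v)\mapsto uxv^{-1}$ by construction.

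Assertion (1) would then follow from the standard bijection between $G\times G$-orbits on a toroidal compactification and $W$-orbits of cones in the fan of $X'$. Since that fan is $W$-invariant and subdivides the Weyl-chamber fan, each $W$-orbit of cones has a unique representative in the negative Weyl chamber $\Omega$, corresponding to a unique $T$-orbit in $X'_0$ (recall $T\times T$-orbits on $X'$ coincide with $T$-orbits because $\operatorname{diag}(T)$ acts trivially on $X'$). Part (2) then yields $\mathcal{O}\cap X_0=U^-\cdot(\mathcal{O}\cap X'_0)\cdot U$ for any $G\times G$-orbit $\mathcal{O}$ that meets $X_0$, so $\mathcal{O}\cap X'_0$ is precisely this single $T$-orbit.

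For (3), a closed $G\times G$-orbit of $X$ maps under $\pi$ onto the unique closed $G_{ad}\times G_{ad}$-orbit of $\mathbf{X}$, which is isomorphic to $G_{ad}/B_{ad}\times G_{ad}/B_{ad}$; since the kernel of $G\to G_{ad}$ is central and thus contained in $B$, the induced map $G/B\to G_{ad}/B_{ad}$ is an isomorphism, and the closed orbit in $X$ is consequently isomorphic to $G/B\times G/B$. The principal obstacle in the whole argument is the big-cell decomposition on $\mathbf{X}$ underlying step (2); this is the substantive piece of wonderful-compactification theory that I would import from \cite[\S 6.1]{BK05} rather than reprove here.
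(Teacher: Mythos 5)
The paper offers no argument for Lemma~\ref{brion}: it is quoted verbatim from \cite[Proposition 6.2.3]{BK05} and used as a black box, so there is no in-paper proof to compare against. Your strategy of reducing to the wonderful compactification $\mathbf{X}$ via the toroidal morphism $\pi$ and importing the big-cell decomposition of $\mathbf{X}_0$ from \cite[\S 6.1]{BK05} is the standard one and matches how the cited proposition is actually established.

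Two places in your sketch, however, leave real gaps as written. In (3), you argue that a closed $G\times G$-orbit $Y\subset X$ maps \emph{onto} the unique closed orbit $\mathbf{Y}\simeq G_{ad}/B_{ad}\times G_{ad}/B_{ad}$ of $\mathbf{X}$ and then conclude ``consequently'' that $Y\simeq G/B\times G/B$. A surjection of homogeneous spaces need not be an isomorphism: a priori $Y=(G\times G)/H$ for some subgroup $H\subset B\times B$, and you have only shown the map is onto. You need an extra step showing $\pi|_Y$ is also injective, e.g.\ by observing that the cone of $X'$ giving $Y$ is full-dimensional, hence the toric part of $Y$ is a point, so $\dim Y=2\dim U=\dim\mathbf{Y}$; combined with connectedness of $B\times B$ this forces $H=B\times B$. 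In (1), you appeal to ``the standard bijection between $G\times G$-orbits and $W$-orbits of cones in the fan of $X'$,'' but that correspondence is essentially equivalent to statement (1) and is normally \emph{derived from} parts (1)--(2) of exactly this lemma, so the argument as phrased is close to circular unless you indicate an independent source or proof of the correspondence.
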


\begin{comment}
\begin{corollary}\label{tech}
Let $O\subset X$ be a $G\times G$-orbit. Then

1) $O':=O\cap X'_0$ is a $T$-orbit in $X'$ and
$\dim (O)=\dim(O')+2\dim(U)$.

2) We have $(X_0)^{T\times T}=(X'_0)^T$.

3) The $G\times G$ orbit of every $x\in (X'_0)^T$ is closed in $X$.
\end{corollary}

\begin{proof}
1) Lemma \ref{brion}.

2) This follows from Lemma \ref{brion}(2) as $(X'_0)^{T\times T}=(X'_0)^T$ and $(U^-)^T=U^T=1$.

3) Let $O$ be the $G\times G$ orbit of $x$. By Lemma \ref{brion}(1), $O'=O\cap X'_0=\{x\}$. By 1), $O$ is
an orbit of the smallest dimension, hence $O$ is closed.
\end{proof}
\end{comment}

\begin{proposition}\label{fixedp}
The scheme $X^{T\times T}$ is the disjoint union of $Wx_0W$ over all $x_0\in (X'_0)^T$ and $Wx_0W$ is a disjoint union of $|W|^2$
rational points.
\end{proposition}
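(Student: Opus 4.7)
The plan is to combine the big-cell description in Lemma \ref{brion}(2) with a $W\times W$-covering of $X$ by translates of $X_0$. First I would compute $(X_0)^{T\times T}$: under the isomorphism $U^-\times X'_0\times U\iso X_0$ of Lemma \ref{brion}(2), $T\times T$ acts by $(t_1,t_2)\cdot(u,x,v)=(t_1ut_1^{-1},(t_1,t_2)\cdot x,t_2vt_2^{-1})$. Since $T$ acts on $U^\pm$ by conjugation with only the trivial fixed point (all roots being nonzero), $u$ and $v$ must be $1$ on the fixed locus; and since $\diag(T)\subset T\times T$ acts trivially on $X'\supset X'_0$, the $T\times T$-action on $X'_0$ factors through the toric $T$-action, yielding $(X'_0)^{T\times T}=(X'_0)^T$. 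Altogether $(X_0)^{T\times T}=(X'_0)^T$.

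Next I would use the cover $X=\bigcup_{(w,w')\in W\times W}wX_0w'^{-1}$, which, by pulling back along $\pi\colon X\to\mathbf{X}$, reduces to the analogous cover of the wonderful compactification (a standard feature of $\mathbf{X}$; compare \cite[\S 6.1]{BK05}). Since each $w\in W$ is represented by an element of $N(T)$, the open set $wX_0w'^{-1}$ is $T\times T$-invariant and $(wX_0w'^{-1})^{T\times T}=w(X_0)^{T\times T}w'^{-1}=w(X'_0)^Tw'^{-1}$. Taking the union,
\[
X^{T\times T}=\bigcup_{x_0\in(X'_0)^T}Wx_0W.
\]

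It remains to verify disjointness and the cardinality $|Wx_0W|=|W|^2$. For $x_0\in(X'_0)^T$, Lemma \ref{brion}(1) gives $O(x_0)\cap X'_0=\{x_0\}$, so $\dim O(x_0)=2\dim U$ is the smallest possible orbit dimension and $O(x_0)$ is closed; distinct points of $(X'_0)^T$ lie in distinct orbits, yielding disjointness. By Lemma \ref{brion}(3), $O(x_0)\cong G/B\times G/B$ via a $G\times G$-equivariant isomorphism $\phi$. The stabilizer of $x_0$ contains $T\times T$, so $\phi(x_0)=(y_1B,y_2B)$ with $y_1,y_2\in N(T)$, and then $\phi(wx_0w'^{-1})=(wy_1B,w'y_2B)$ runs over the $|W|^2$ distinct points of $(G/B\times G/B)^{T\times T}$ (using $(G/B)^T=W$ from Bruhat decomposition). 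The main obstacle is establishing the $W\times W$-cover cleanly; granted this structural fact from wonderful compactification theory, everything else follows directly from the preceding lemmas and Bruhat.
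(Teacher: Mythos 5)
Your proof is correct but follows a genuinely different route from the paper's. The paper argues that for $x\in X^{T\times T}$, the image $\mathbf{x}=\pi(x)$ in the wonderful compactification $\mathbf{X}$ must lie in the \emph{unique closed} $G\times G$-orbit $\mathbf{O}$ (deducing this from a comparison of dimensions of maximal tori of stabilizers, with a reference to Evens--Jones), then exploits $\mathbf{O}\simeq G/B\times G/B$ and the simply transitive $W\times W$-action on $\mathbf{O}^{T\times T}$ to place $WxW$ into $\mathbf{X}'_0$ and hence $x$ into $W(X'_0)^TW$. You instead bypass the closed-orbit argument entirely: you compute $(X_0)^{T\times T}=(X'_0)^T$ straight from the big-cell isomorphism of Lemma \ref{brion}(2) (correctly identifying the $T\times T$-action on $U^-\times X'_0\times U$ and noting the conjugation action on $U^\pm$ has only the trivial fixed point), and then invoke the covering $X=\bigcup_{(w,w')\in W\times W}wX_0w'^{-1}$ pulled back from the analogous standard cover of $\mathbf{X}$. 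This cover is indeed a classical feature of the wonderful compactification (going back to De Concini--Procesi and visible in \cite[\S 6.1]{BK05}), and your pull-back along the $G\times G$-equivariant $\pi$ is sound, so the ``main obstacle'' you flag is not really an obstacle. Your disjointness and cardinality arguments (via Lemma \ref{brion}(1) for uniqueness of the $T$-orbit meeting $X'_0$, and Lemma \ref{brion}(3) for $O(x_0)\simeq G/B\times G/B$ with $(G/B)^T\simeq W$) coincide in spirit with the paper's. Net comparison: your version substitutes one piece of wonderful-compactification structure (the $W\times W$-cover) for another (the fixed-point-forces-closed-orbit fact), trading the external reference \cite[Lemma 4.2]{EF08} for a more direct fixed-locus computation in the big cell; both are equally rigorous, and yours arguably reads more transparently since it avoids the stabilizer-dimension detour.
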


\begin{proof}
Take $x\in X^{T\times T}$. Let $\bf x$ be the image of $x$ under the map $\pi:X\to \bf X$. Computing dimensions of maximal tori of
the stabilizers of points in the wonderful compactification $\bf X$, we see that $\bf x$ lies in the only closed $G \times G$-orbit
${\bf O}$ in ${\bf X}$ (e.g., \cite[Lemma 4.2]{EF08}). By Lemma \ref{brion}(3), applied to the compactification ${\bf X}$ of
$G_{ad}$, ${\bf O}\simeq G/B\times G/B$.
In view of Lemma \ref{brion}(1), ${\bf O}\cap {\bf X}'_0$ is a closed $T$-orbit in ${\bf X}'_0$ and therefore, reduces to a single rational
$T$-invariant point in ${\bf X}'_0$. The group $W\times W$ acts simply transitively on the set of $T\times T$-fixed point in $G/B\times G/B$.
It follows that $|W{\bf x}W|=|W|^2$ and $W{\bf x}W$ intersects ${\bf X}'_0$. Therefore, $WxW$ intersects $X^{T\times T}\cap X'_0=(X'_0)^T$, that is
the disjoint union of $k$ rational points. Hence $x$ is a rational point, $x\in W(X'_0)^T W$ and $|WxW|=|W|^2$.

Note that for a point $x_0\in (X'_0)^T$, the $G\times G$-orbit of $x_0$ intersects $X'_0$ by the $T$-orbit $\{x_0\}$ in view of Lemma \ref{brion}(1).
It follows that different $Wx_0W$ do not intersect and therefore, $X^{T\times T}$ is the disjoint union of $Wx_0W$ over all $x_0\in (X'_0)^T$.
\end{proof}

Let $X$ be a smooth $G\times G$-equivariant toroidal compactification of a split semisimple group $G$ of rank $n$.
By Proposition \ref{fixedp}, every $T\times T$-fixed point $x$ in $X$ is of the form $x=w_1x_0w_2\inv$, where
$w_1,w_2\in W$ and $x_0\in (X'_0)^T$. Recall that $X'_0$ is the union of the standard affine open subsets $V_\sigma$
of the toric $T$-variety $X'$ over all cones $\sigma$ of dimension $n$ in the Weyl chamber $\Omega$. Let $\sigma$
be a (unique) cone in $\Omega$ such that $x_0\in V_\sigma$.

By Lemma \ref{brion}(2), the map
\[
f:U^-\times V_\sigma \times U\to X,\quad (u_1,y,u_2)\mapsto w_1u_1 x_0 u_2\inv w_2\inv
\]
is an open embedding. We have $f(1,x_0,1)=x$. Thus, $f$ identifies the tangent space $\cT_x$ of $x$ in $X$ with the
space $\frak{u^-}\oplus \frak{a}\oplus \frak{u}$, where $\frak{u}$ and $\frak{u^-}$ are the Lie algebras of $U$ and $U^-$ respectively
and $\frak{a}$ is the tangent space of $V_\sigma$ at $x'$. The torus $T\times T$ acts linearly on the tangent space $\cT_x$
leaving invariant $\frak{u^-}$, $\frak{a}$ and $\frak{u}$. For
convenience, we write $T\times T$ as $S:=T_1\times T_2$ in order to distinguish the components.
Let $\Phi_1^-$ and $\Phi_2^-$ be two copies of the set of negative roots in $T_1^*$ and $T_2^*$ respectively.
The set of characters of
the $S$-representation $\frak{u^-}$ (respectively, $\frak{u}$) is ${w_1}(\Phi_1^-)$ (respectively, ${w_2}(\Phi_2^-)$).

Let $\{\chi_1, \chi_2,\dots,\chi_n\}$ be a
(unique) $\Z$-basis of $T^*$ generating the dual cone $\sigma^\vee$. By Example \ref{toricex}, the set of characters of
the $S$-representation $\frak{a}$ is
\[
\{(w_1(\chi_i), - w_2(\chi_i))\}_{i=1}^{n}\subset S^*=T_1^*\oplus T_2^*.
\]
Let $\Pi_1$ and $\Pi_2$ be (ordered) systems of simple roots in $\Phi_1$ and $\Phi_2$ respectively. Consider the lexicographic ordering
on $S^*=T_1^*\oplus T_2^*$ corresponding to the basis $\Pi_1\cup \Pi_2$ of $S^*$. As $\chi_i\neq 0$, we have $(w_1(\chi_i), - w_2(\chi_i))>0$
if and only if $w_1(\chi_i)>0$. For every $w\in W$, write $b(\sigma, w)$ for the number of all $i$ such that $w(\chi_i)> 0$.
Note that the number of positive roots in $w(\Phi^-)$ is equal to the length $l(w)$ of $w$.
By Corollary \ref{totalsplit}, we have
\begin{equation}\label{mainformula}
P_X(t)=\sum_{w_1,w_2\in W,\ \sigma\subset\Omega} t^{l(w_1)+b(\sigma, w_1)+l(w_2)}=\Big(\sum_{w\in W,\ \sigma\subset\Omega} t^{l(w)+b(\sigma, w)}\Big)\cdot P_{G/B}(t),
\end{equation}
as by Example \ref{flags},
\[
P_{G/B}(t)=\sum_{w\in W} t^{l(w)}.
\]

We have proved the following theorem.

\begin{theorem}\label{semis}
Let $X$ be a smooth $G\times G$-equivariant toroidal compactification of a split semisimple group $G$. Then the motive $M(X)$ is split into a direct sum of
$s|W|$ Tate motives, where $s$ is the number of cones of maximal dimension in the fan of the associated toric variety $X'$. Moreover,
\[
P_X(t)=\Big(\sum_{w\in W,\ \sigma\subset\Omega} t^{l(w)+b(\sigma, w)}\Big)\cdot P_{G/B}(t).
\]
In particular, the motive $M(X)$ is divisible by $M(G/B)$.
\end{theorem}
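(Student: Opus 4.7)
The plan is to deduce Theorem \ref{semis} as a direct application of Corollary \ref{totalsplit} to the action of $S := T_1 \times T_2$ on $X$, assembling the inputs already spread through the preceding discussion.

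First I would enumerate $X^{S}$ using Proposition \ref{fixedp}: it is the disjoint union of the orbits $W x_0 W$ over $x_0 \in (X'_0)^T$, each consisting of $|W|^2$ rational points. Setting $k := |(X'_0)^T|$, the $W$-invariance of the fan of $X'$ and the simple transitivity of $W$ on the Weyl chambers give $s = k|W|$, so $|X^{S}| = k|W|^2 = s|W|$. Corollary \ref{totalsplit} then yields at once that $M(X)$ is split into $s|W|$ Tate summands, settling the first assertion of the theorem.

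Next, for a fixed point $x = w_1 x_0 w_2^{-1}$ with $x_0 \in V_\sigma$ and $\sigma \subset \Omega$ of top dimension, I would read off from Lemma \ref{brion}(2) that the $S$-representation on the tangent space $\cT_x \simeq \mathfrak{u}^- \oplus \mathfrak{a} \oplus \mathfrak{u}$ splits into three pieces whose character sets are those recorded above the statement. Under the lexicographic order built from $\Pi_1 \cup \Pi_2$, the numbers of positive characters on the three summands are $l(w_1)$, $b(\sigma, w_1)$, and $l(w_2)$ respectively. The one point that deserves care is that $\chi_i \neq 0$ forces the sign of $(w_1(\chi_i), -w_2(\chi_i))$ to be determined by $w_1(\chi_i)$ alone, so that $\mathfrak{a}$ contributes $b(\sigma, w_1)$ and not a quantity depending on both $w_1$ and $w_2$.

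Summing the exponents $t^{l(w_1) + b(\sigma, w_1) + l(w_2)}$ over all triples $(w_1, w_2, \sigma)$ yields formula (\ref{mainformula}). Because the $w_2$-dependence decouples completely, the sum factors as a polynomial in $(w_1, \sigma)$ times $\sum_{w_2 \in W} t^{l(w_2)}$, which equals $P_{G/B}(t)$ by Example \ref{flags}; this gives the displayed formula for $P_X(t)$. Since both $M(X)$ and $M(G/B)$ are direct sums of Tate motives, this factorization of Poincaré polynomials directly identifies $M(X)$ with a direct sum of shifted copies of $M(G/B)$, which is the divisibility assertion. I do not foresee any real obstacle: essentially everything is set up in the paragraphs preceding the theorem, and the only substantive observation is the $w_2$-independence of $b(\sigma, w_1)$ that makes $P_{G/B}(t)$ appear as a factor.
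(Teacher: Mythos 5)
Your proposal is correct and reproduces the paper's own argument faithfully: it uses Proposition \ref{fixedp} for the count $|X^{T\times T}|=s|W|$, Lemma \ref{brion}(2) for the tangent-space decomposition $\mathfrak{u}^-\oplus\mathfrak{a}\oplus\mathfrak{u}$, the lexicographic order from $\Pi_1\cup\Pi_2$ to count positive characters, and Corollary \ref{totalsplit} to obtain the splitting and the Poincar\'e polynomial, exactly as in the paragraphs preceding the theorem. The key point you highlight---that $(w_1(\chi_i),-w_2(\chi_i))>0$ depends only on $w_1(\chi_i)$, making the $w_2$-sum factor out as $P_{G/B}(t)$---is indeed the crux of the paper's derivation of formula (\ref{mainformula}).
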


\begin{example}
Let $G$ be a semisimple adjoint group and $X$ the wonderful compactification of $G$. Then the negative Weyl chamber $\Omega$ is the cone $\sigma=\Omega$
in the fan of $X'$.
The dual cone $\sigma^\vee$ is generated by $-\Pi$. Hence $b(w,\sigma)$ is equal to the number of \emph{simple} roots $\alpha$ such that $w(\alpha)\in \Phi^-$.
\end{example}

\begin{example}\label{sl2}
Let $G=\gSL_3$, $\Pi=\{\alpha_1,\alpha_2\}$. Bisecting each of the six Weyl chambers we get a smooth projective fan with $12$ two-dimensional cones.
The two cones dual to the ones in the negative Weyl chamber are generated by $\{-\alpha_1,(\alpha_1-\alpha_2)/3\}$ and $\{-\alpha_2,(\alpha_2-\alpha_1)/3\}$ respectively.
Let $X$ be the corresponding $G\times G$-equivariant toroidal compactification of $G$. By (\ref{mainformula}),
\[
P_X(t)=(t^5+t^4+4t^3+4t^2+t+1)(t^3+2t^2+2t+1).
\]
\end{example}

Now consider arbitrary (not necessarily toroidal) $G\times G$-equivariant compactifications.

\begin{theorem} \label{equivsplit}
Let $X$ be a smooth $G\times G$-equivariant compactification of a split semisimple group $G$ over $F$. Then the subscheme $X^{T\times T}$
is a disjoint union of finitely many rational points. In particular, the motive $M(X)$ is split.
\end{theorem}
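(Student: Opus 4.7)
The plan is to reduce the general case to the toroidal one already handled by Proposition~\ref{fixedp}, via a dominating smooth toroidal compactification. First I would produce, using the Luna--Vust theory of spherical embeddings applied to the $G\times G$-spherical homogeneous space $G=(G\times G)/\diag(G)$ together with the smoothening results of \cite{Brylinski79} and \cite{CTHS05} that were already cited in Proposition~\ref{torexist}, a smooth $G\times G$-equivariant toroidal compactification $Y$ of $G$ equipped with a $G\times G$-equivariant morphism $f\colon Y\to X$ extending the identity on $G$. Completeness of $Y$ and $X$ makes $f$ proper, and the fact that $f$ restricts to the identity on the dense open $G$ makes $f$ surjective; Proposition~\ref{fixedp} then identifies $Y^{T\times T}$ with a disjoint union of finitely many $F$-rational points.

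Next I would transfer these properties from $Y^{T\times T}$ to $X^{T\times T}$ by a Galois-descent argument. For every $\bar F$-point $x\in X^{T\times T}(\bar F)$, the fibre $f_{\bar F}\inv(x)$ is a non-empty proper $(T\times T)$-invariant $\bar F$-scheme; the classical Borel fixed-point theorem then produces a $(T\times T)$-fixed $\bar F$-point in this fibre, so the induced map $Y^{T\times T}(\bar F)\to X^{T\times T}(\bar F)$ is surjective. Since every point of $Y^{T\times T}(\bar F)$ is already $F$-rational by the first step, the Galois group $\Gamma=\Gal(\bar F/F)$ acts trivially on $Y^{T\times T}(\bar F)$; by $\Gamma$-equivariance of the surjection it must also act trivially on $X^{T\times T}(\bar F)$. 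Hence $X^{T\times T}$ has only finitely many closed points, all of them $F$-rational.

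To conclude, smoothness of $X$ combined with reductivity of $T\times T$ gives smoothness of $X^{T\times T}$, and a finite smooth $F$-scheme whose closed points are all $F$-rational must be a disjoint union of reduced rational points. The splitness of $M(X)$ then follows from Corollary~\ref{totalsplit}.

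The main difficulty will be the first step: arranging for the dominating smooth toroidal cover $Y\to X$ to be defined over $F$ itself rather than merely over $\bar F$. Over perfect $F$ this is handled by the spherical-embedding and smoothening machinery cited above; for general $F$ one would descend from the perfect closure, exploiting that the relevant combinatorial data (fans, Weyl chambers, the $W$-action) are visibly rational.
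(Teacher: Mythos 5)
Your proof is essentially the paper's: dominate $X$ by a $G\times G$-equivariant toroidal compactification $\widetilde X$ equipped with a $G\times G$-equivariant morphism $\varphi\colon\widetilde X\to X$, use Borel's fixed-point theorem on fibres of $\varphi$ to get surjectivity of $\widetilde X^{T\times T}\to X^{T\times T}$, and conclude via Proposition~\ref{fixedp}. The paper obtains $\varphi$ over $F$ directly by invoking \cite[Proposition~6.2.5]{BK05} (which settles the ``main difficulty'' you flag at the end) and argues at the level of $F$-schemes rather than passing through $\bar F$-points and Galois descent.
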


\begin{proof}
By \cite[Proposition 6.2.5]{BK05}, there is a $G\times G$-equivariant toroidal compactification $\widetilde X$ of $G$ together with
a $G\times G$-equivariant morphism $\varphi:\widetilde X\to X$. Let $x\in X^{T\times T}$. By Borel's fixed point theorem, the fiber $\varphi\inv(x)$
has a $T\times T$-fixed point, so the map $\widetilde X^{T\times T}\to X^{T\times T}$ is surjective. By Proposition \ref{fixedp}, $\widetilde X^{T\times T}$
is a disjoint union of finitely many rational points, hence so is $X^{T\times T}$.
\end{proof}

\begin{example}\label{twist}
Let $Y$ be a smooth $H\times H$-equivariant compactification of the group $H=\gSL_n$ over $F$. In particular the projective linear group
$\gPGL_n$ acts on $Y$ by conjugation. Let $D$ be a central simple $F$-algebra of degree $n$ and $J$ the corresponding $\gPGL_n$-torsor.
The twist of $H$ by $J$ is the group $G=\gSL_1(D)$, hence the twist $X$ of $Y$ is a smooth $G\times G$-equivariant compactification of $G$.
%By \ref{equivsplit}, the motive $M(X^E)$ is geometrically split.
If $E$ is a $G$-torsor, one
can twist $X$ by $E$ to get a smooth compactification of $E$.
By Theorem \ref{equivsplit}, the motives of these compactifications are split over
every splitting field of $D$.
%the function field of the Severi-Brauer variety $\SB(D)$ (the ($p-1$)-dimensional variety of %right ideals of $D$ of dimension $p$), because $F(\SB(D))$ splits $D$.
\end{example}

%Then the motive of $X$ is split.

\section{Some computations in $\CH(\gSL_1(D))$}

Let $D$ be a central simple algebra of prime degree $p$ over $F$ and $G=\gSL_1(D)$.

\begin{lemma}\label{infinity}
%Assume that $D$ is a central division algebra and
Let $X$ be a smooth compactification of $G$. Then $D$ is split by the residue field of every point
in $X\setminus G$.
\end{lemma}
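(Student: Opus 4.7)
I may assume $D$ is a division algebra (otherwise $D$ is already split over every field). For $x\in X\setminus G$ with residue field $k:=F(x)$, the goal is to exhibit a nonzero $\bar\eta\in D\otimes_F k$ with $\Nrd(\bar\eta)=0$; since $p$ is prime, such an $\bar\eta$ is a nontrivial zero divisor, forcing $D\otimes_F k$ to be non-division and hence split. The element $\bar\eta$ will arise by specializing the tautological $F(X)$-point of $G$ along an appropriately chosen DVR with residue field exactly $k$.

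\textbf{Key valuation argument.} First I would prove the following. Let $R$ be a DVR with uniformizer $\pi$, fraction field $K$, and residue field $k$, and let $\varphi\colon\Spec R\to X$ be an $F$-morphism whose generic point lands in $G$ and whose closed point lands in $X\setminus G$. Then $D\otimes_F k$ is split. Indeed, the generic point yields $\xi\in G(K)\subset D\otimes_F K$ with $\Nrd(\xi)=1$. Fix an $F$-basis $e_1,\dots,e_{p^2}$ of $D$ and write $\xi=\sum\xi_i e_i$. If every $\xi_i$ lies in $R$, then $\xi$ defines an $R$-point of $G$, giving a second morphism $\Spec R\to G\hookrightarrow X$ that agrees with $\varphi$ on the generic point; by separatedness of $X$ the two morphisms coincide, contradicting the hypothesis on the closed point. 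So some $\xi_i$ has negative valuation; setting $N:=-\min_i v(\xi_i)\geq 1$, the element $\eta:=\pi^N\xi\in D\otimes_F R$ has at least one unit coefficient, while $\Nrd(\eta)=\pi^{pN}\in\mathfrak{m}_R$. Reducing modulo $\mathfrak{m}_R$ produces the required $\bar\eta$.

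\textbf{Producing the DVR.} For a general $x\in X\setminus G$, it remains to construct a DVR $R$ with residue field exactly $k=F(x)$ satisfying the hypotheses of the lemma. When $k$ is finite, $\Br(k)=0$ and there is nothing to prove, so assume $k$ is infinite. Base change to $k$: $X_k=X\otimes_F k$ is smooth, and $x$ lifts canonically to a $k$-rational point $\tilde x\in X_k\setminus G_k$. By smoothness, $\hat\cO_{X_k,\tilde x}\cong k[[t_1,\dots,t_n]]$ with $n=p^2-1$. A generic linear change of coordinates---possible since $k$ is infinite and there are only finitely many irreducible components of $(X\setminus G)_k$ through $\tilde x$---arranges that the ideal $I$ of $(X\setminus G)_k$ in $\hat\cO_{X_k,\tilde x}$ is not contained in $(t_2,\dots,t_n)$, i.e.\ no local boundary component contains the formal arc cut out by $t_2=\cdots=t_n=0$. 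Then the composition $\Spec k[[t_1]]\to\Spec\hat\cO_{X_k,\tilde x}\to X_k\to X$ sends the closed point to $x$ and the generic point into $G$, so the key lemma applies with $R=k[[t_1]]$ and yields the result.

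\textbf{Main obstacle.} The delicate step is the last one: arranging the formal arc to be transverse to every local boundary component at $\tilde x$. The infinite residue field case is handled by the standard generic-coordinates/transversality argument, while the finite residue field case is trivial because $\Br(k)=0$.
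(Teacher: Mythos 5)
Your proposal is correct, and it shares the central idea with the paper---specialize the tautological $F(X)$-point of $G$ to the boundary so that $\Nrd$ becomes isotropic over the residue field, then use that $p$ is prime---but it executes it along a genuinely different route. The paper works with the explicit proper model $Y=\{\Nrd=t^p\}\subset\P(D\oplus F)$ of $G$: a regular system of parameters at $x$ gives a (in general non-discrete, rank equal to the codimension of $x$) valuation $v$ of $F(G)$ over $F$ with residue field exactly $F(x)$; properness of $Y$ then gives a center $y\in Y\setminus G=\{\Nrd=0\}\subset\P(D)$ of $v$, so $\Nrd$ is isotropic over $F(y)\subset F(v)=F(x)$, and one is done in a few lines. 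Your ``key valuation argument'' is in effect a hands-on proof of the valuative criterion for the same $Y$ (multiplying the $K$-point of $G$ by $\pi^N$ produces an $R$-point of $\P(D\oplus F)$ landing on $\{t=0\}$), which is fine; but insisting on a genuine DVR with residue field exactly $F(x)$ is what forces the extra work---the base change to $k=F(x)$, the formal-arc construction at $\hat\cO_{X_k,\tilde x}$, the generic transversality argument (which needs $k$ infinite), and the separate finite-residue-field case via $\Br(k)=0$. All of this is correct, but the paper's use of the composite valuation lets it get the residue field $F(x)$ directly and uniformly, without a field-size dichotomy and without the formal-arc construction. In short: same strategy, but you avoid higher-rank valuations and the black-box valuative criterion at the cost of a longer and case-split argument; the paper buys brevity and uniformity by allowing a non-discrete valuation.
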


\begin{proof}
Let $Y$ be the projective (singular) hypersurface given in the projective space $\P(D\oplus F)$ by the equation $\Nrd=t^p$,
where $\Nrd$ is the reduced norm form. The group $G$ is an open subset in $Y$,
so we can identify the function fields $F(X)=F(G)=F(Y)$. Let $x\in X\setminus G$. As $x$ is smooth in $X$,
there is a regular system of local parameters around $x$ and therefore a
valuation $v$ of $F(G)$ over $F$ with residue field $F(x)$. Since $Y$ is projective, $v$ dominates a point
$y\in Y\setminus G$. Over the residue field $F(y)$ the norm form $\Nrd$ is isotropic, hence $D$ is split over $F(y)$.
Since $v$ dominates $y$, the field $F(y)$ is contained in $F(v)=F(x)$. Therefore, $D$ is split over $F(x)$.
\end{proof}

\begin{lemma}\label{dimzero}
If $D$ is a division algebra, then the group $\CH_0(G)=\CH^{p^2-1}(G)$ is cyclic of order $p$
generated by the class of the identity $e$ of $G$.
\end{lemma}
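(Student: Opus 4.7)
The plan has three parts: (a) $p \cdot [e] = 0$; (b) $[e] \neq 0$ in $\CH_0(G)$; and (c) every class in $\CH_0(G)$ is an integer multiple of $[e]$. Together these yield $\CH_0(G) = (\Z/p\Z)[e]$. Part (a) is immediate from the $p$-periodicity of $\CH^i(G)$ for $i > 0$ already established in the discussion preceding Lemma \ref{p+1}.

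For part (b), I would fix a smooth compactification $X$ of $G$ (available via Example \ref{twist}) and combine Lemma \ref{infinity} with the localization sequence
\[
\CH_0(X \setminus G) \xrightarrow{i_*} \CH_0(X) \xrightarrow{j^*} \CH_0(G) \to 0.
\]
By Lemma \ref{infinity}, every closed point of $X \setminus G$ has residue field splitting $D$ and hence of degree divisible by $\ind(D) = p$. Consequently, the degree map $\deg \colon \CH_0(X) \to \Z$ descends to a well-defined homomorphism $\overline{\deg} \colon \CH_0(G) \to \Z/p\Z$, under which $[e] \mapsto 1$. Combined with (a), this forces $[e]$ to have order exactly $p$.

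Part (c) is the main step. For any closed point $x$ of $G$, I would show $[x] = [F(x):F] \cdot [e]$. Setting $L := F(x)$, let $\bar x \in G_L(L)$ denote the $L$-rational point of $G_L$ corresponding to $x$. Under the finite flat projection $p \colon G_L \to G$, one computes $p_*[\bar x] = [x]$ and $p_*[e_L] = [L:F] \cdot [e]$, reducing the claim to showing $[\bar x] = [e_L]$ in $\CH_0(G_L)$.

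The equality $[\bar x] = [e_L]$ will follow from Wang's theorem $\SK_1(D_L) = 0$ (valid because $D_L$ has prime degree $p$) together with a deformation argument. Wang's theorem expresses $\bar x = [a_1, b_1] \cdots [a_r, b_r]$ as a product of commutators in $D_L\m$. Each commutator $[a, b]$ will be connected to $e_L$ via the morphism $c(t) := [1 + t(a - 1),\, 1 + t(b - 1)]$, defined on a Zariski open $U \subseteq \A^1_L$ containing both $0$ and $1$ (the factors $1 + t(a-1)$ and $1 + t(b-1)$ are invertible at $t = 0, 1$, and $\Nrd(c(t)) = 1$ identically, so $c$ maps into $G_L$); by projectivity this extends to a morphism $\P^1_L \to X_L$, yielding $[[a,b]] = [e_L]$ in $\CH_0(X_L)$ and hence in $\CH_0(G_L)$. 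Products of commutators are handled by concatenating such deformations. The main subtleties I anticipate are the careful push-forward bookkeeping for $p \colon G_L \to G$ (tracking residue fields) and checking that $c$ extends to $\P^1_L$; both reduce to standard properness and flatness arguments.
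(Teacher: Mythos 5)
Your proof is correct and follows essentially the same approach as the paper: Wang's theorem to connect closed points to $[e]$ via the norm map, a degree argument on a compactification (using Lemma \ref{infinity}) to show $[e]\neq 0$, and $p$-periodicity for the order. The only minor difference is that where the paper cites the identification of $G(F)/R$ with $\SK_1(D)$ from Voskresenski\u{\i}, you re-derive the relevant direction by hand via the explicit commutator deformation $c(t)$, which is a correct (and slightly more self-contained) way to carry out the same step.
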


\begin{proof}
The group of $R$-equivalence classes of points in $G(F)$ is equal to $\operatorname{SK}_1(D)$ (see \cite[Ch. 6]{Voskresenski98}) and hence is trivial
by a theorem of Wang. It follows that we have $[x]=[e]$ in $\CH_0(G)$ for every rational point $x\in G(F)$.
If $x\in G$ is a closed point, then $[x']=[e]$ in $\CH_0(G_K)$, where $K=F(x)$ and $x'$ is a rational point of $G_K$ over $x$.
Taking the norm homomorphism $\CH_0(G_K)\to \CH_0(G)$ for the finite field extension $K/F$, we have $[x]=[K:F]\cdot [e]$
in $\CH_0(G)$. It follows that $\CH_0(G)$ is a cyclic group generated by $[e]$.

As $p\cdot \CH_0(G)=0$ it suffices to show that $[e]\neq 0$ in $\CH_0(G)$. Let $Y$ be the compactification of $G$ as in
the proof of Lemma \ref{infinity} and let $Z=Y\setminus G$. As $D$ is a central division algebra, the degree of every closed point of $Z$ is divisible by $p$ by Lemma \ref{infinity}.
It follows that the class $[e]$ in $\CH_0(Y)$ does not belong to the image of the push-forward homomorphism $i$ in the exact sequence
\[
\CH_0(Z)\xra{i}\CH_0(Y)\to \CH_0(G)\to 0.
\]
Therefore, $[e]\neq 0$ in $\CH_0(G)$.
\end{proof}

\begin{comment}
By \cite[Part II, \S 7]{GMS}, the group $H^{3,2}(G)=A^1(G,K_2)$ is infinite cyclic with a canonical generator $v$. Consider
Milnor's operation
\[
Q_1:H^{3,2}(G)\ra H^{2l+2,l+1}(G,\Z/l\Z)=\CH^{l+1}(G)/l=\CH^{l+1}(G).
\]
Set $h:=Q_1(v)\in \CH^{l+1}(G)$.
\end{comment}

Consider the morphism $s:G\times G\to G$, $s(x,y)=xy\inv$.
Note that $s$ is flat as the composition of the automorphism
$(x,y)\mapsto(xy^{-1},y)$ of the variety $G\times G$ with the projection
$G\times G\to G$.

Let $h=\partial_G(q_G) \in\CH^{p+1}(G)$.

\begin{lemma}\label{pull}
We have $s^*(h)=h\times 1 - 1\times h$ in $\CH^{p+1}(G\times G)$.
\end{lemma}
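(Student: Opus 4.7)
The plan is to exploit the functoriality of the BGQ spectral sequence under flat pullbacks. Since $s$ is flat, the pullback $s^*$ commutes with every differential of the spectral sequence; in particular with the differential $\partial$ that produces $h$. So the problem reduces to computing $s^*(q_G) \in A^1(G \times G, K_2)$ and then identifying the class obtained after applying $\partial_{G \times G}$.

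For the first step I factor $s = m \circ (\id \times \iota)$, where $m \colon G \times G \to G$ is multiplication and $\iota \colon G \to G$ is inversion. Lemma \ref{addit}(1) gives $m^*(q_G) = q_G \times 1 + 1 \times q_G$ under the direct sum decomposition $A^1(G \times G, K_2) = A^1(G, K_2) \oplus A^1(G, K_2)$ induced by the two projections. Applying $(\id \times \iota)^*$ and invoking Lemma \ref{addit}(2) on the second factor then yields
$$s^*(q_G) = q_G \times 1 - 1 \times q_G.$$

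For the second step I apply the same naturality to the flat projections $p_1, p_2 \colon G \times G \to G$, which send $q_G$ to $q_G \times 1$ and $1 \times q_G$ respectively. This gives $\partial_{G \times G}(q_G \times 1) = p_1^*(h) = h \times 1$ and, symmetrically, $\partial_{G \times G}(1 \times q_G) = 1 \times h$. Combining everything,
$$s^*(h) = s^*(\partial_G(q_G)) = \partial_{G \times G}\bigl(s^*(q_G)\bigr) = \partial_{G \times G}(q_G \times 1 - 1 \times q_G) = h \times 1 - 1 \times h,$$
as desired.

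The only ingredient beyond Lemma \ref{addit} is the naturality of the BGQ (coniveau) spectral sequence with respect to flat pullbacks, and this is the step one needs to justify carefully; it is however standard, since flat morphisms preserve codimension and therefore respect the coniveau filtration that the spectral sequence is built from. Once this is granted, the argument is a two-line diagram chase.
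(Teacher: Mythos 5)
Your proposal is correct and follows essentially the same route as the paper: compute $s^*(q_G)$ via Lemma~\ref{addit}, then transport through the differential using naturality of the BGQ spectral sequence under flat pullbacks. The paper's proof is just a terser version that invokes Lemma~\ref{addit} without spelling out the factorization $s=m\circ(\id\times\iota)$ and silently uses the projection argument for the final identification $\partial_{G\times G}(q_G\times 1-1\times q_G)=h\times 1-1\times h$.
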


\begin{proof}
By Lemma \ref{addit}, we have $s^*(q_G)=q_G\times 1 -1\times q_G$ in $A^1(G\times G,K_2)$.
The differentials $\partial_G$ commute with flat pull-back maps, hence we have
\[
s^*(h)=s^*(\partial_G(q_G))=\partial_{G\times G}(s^*(q_G))=\partial_{G\times G}(q_G\times 1-1\times q_G)=
\]
\[\partial_G(q_G)\times 1 - 1\times \partial_G(q_G)=h\times 1 - 1\times h. \qedhere
\]
\end{proof}

\begin{proposition}\label{diagonal}
Let $c$ be an integer with $h^{p-1}=c[e]$ in $\CH^{p^2-1}(G)$. Then
\[
c\Delta_G=\sum_{i=0}^{p-1}h^i\times h^{p-1-i},
\]
where $\Delta_G$ is the class of the diagonal $\operatorname{diag}(G)$ in $\CH^{p^2-1}(G\times G)$.
\end{proposition}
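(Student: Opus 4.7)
The plan is to pull back the relation $h^{p-1} = c[e]$ along the flat morphism $s\colon G \times G \to G$, $(x,y) \mapsto xy\inv$. The first key observation is that $s^*([e]) = \Delta_G$: the scheme-theoretic fiber $s\inv(e)$ is precisely $\operatorname{diag}(G)$ (both are smooth of the same dimension $p^2 - 1$), and for a flat morphism the pull-back of the class of a regular closed point is the class of the scheme-theoretic fiber. Consequently, applying $s^*$ to $h^{p-1} = c[e]$ gives $s^*(h^{p-1}) = c\Delta_G$.

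On the other hand, $s^*$ is a ring homomorphism and Lemma \ref{pull} yields $s^*(h) = h \times 1 - 1 \times h$. Since $h \times 1$ and $1 \times h$ commute in $\CH(G \times G)$, the binomial theorem gives
\[
s^*(h^{p-1}) = (h \times 1 - 1 \times h)^{p-1} = \sum_{i=0}^{p-1} \binom{p-1}{i}(-1)^{p-1-i}\, h^i \times h^{p-1-i}.
\]
Comparing the two expressions for $s^*(h^{p-1})$ reduces the proposition to showing that the coefficient $\binom{p-1}{i}(-1)^{p-1-i}$ may be replaced by $1$ in each term.

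To justify this replacement, observe that every summand $h^i \times h^{p-1-i}$ lies in $\CH^{p^2-1}(G \times G)$ and is $p$-torsion: by Lemma \ref{p+1}, $h$ has order $p$ in $\CH^{p+1}(G)$, so $h \times 1$ and $1 \times h$ are $p$-torsion, and hence so is any product that contains them as a factor (for $i=0$ or $i=p-1$ one uses instead that $h^{p-1} = c[e]$ is $p$-torsion by Lemma \ref{dimzero}). The congruence $\binom{p-1}{i} \equiv (-1)^i \pmod{p}$ then yields $\binom{p-1}{i}(-1)^{p-1-i} \equiv (-1)^{p-1} \equiv 1 \pmod{p}$ in all cases (including $p = 2$, where $-1 \equiv 1$), and the claimed identity follows.

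The main technical point is the equality $s^*([e]) = \Delta_G$, which rests on the flatness of $s$ (already noted in the excerpt as a composition of an automorphism of $G \times G$ with a projection) together with the standard interpretation of flat pull-back of the class of a regular rational point. Everything else is transparent binomial arithmetic combined with the mod-$p$ vanishing supplied by Lemmas \ref{p+1} and \ref{dimzero}.
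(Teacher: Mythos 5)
Your proof is correct and follows essentially the same route as the paper's: apply the flat pull-back $s^*$ to both sides of $h^{p-1}=c[e]$, identify $s^*([e])=\Delta_G$ via the fiber of $s$ over $e$, expand $s^*(h)^{p-1}=(h\times 1-1\times h)^{p-1}$ by the binomial theorem, and reduce the coefficients modulo $p$ using $ph=0$ and $\binom{p-1}{i}\equiv(-1)^i\pmod p$. Your writeup simply makes more explicit the flatness justification and the $p$-torsion of each summand (where the case split at $i=0,p-1$ is in fact unnecessary since $p-1\geq1$ already makes every term involve a positive power of $h$).
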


\begin{proof}
The diagonal in $G\times G$ is the pre-image of $e$ under $s$. Hence by Lemma \ref{pull},
\[
c\Delta_G=cs^*([e])=s^*(h^{p-1})=(h\times 1-1\times h)^{p-1}=\sum_{i=0}^{p-1}h^i\times h^{p-1-i}
\]
as $\binom{p-1}{i}\equiv (-1)^i$ modulo $p$ and
%$p\cdot\CH^i(G)=0$ for $i>0$.
$ph=0$.
\end{proof}

\section{Rost's theorem}\label{rost}

We have proved in Lemma \ref{p+1} that if $D$ is a central division algebra, then $\partial_G(q_G)\neq 0$ in $\CH^{p+1}(G)$.
This result is strengthened in Theorem \ref{power} below.

\begin{lemma}\label{chut}
If there is an element $h\in\CH^{p+1}(G)$ such that
$h^{p-1}\neq 0$, then $\partial_G(q_G)^{p-1}\neq 0$.
\end{lemma}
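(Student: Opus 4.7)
The plan is very short because this lemma is essentially a multiplicative restatement of Lemma \ref{p+1}. Under the standing hypothesis of this section that $D$ is a central division algebra, Lemma \ref{p+1} identifies $\CH^{p+1}(G)$ as a cyclic group of order $p$ with canonical generator $\partial_G(q_G)$. Therefore any element $h \in \CH^{p+1}(G)$ can be written as $h = m \cdot \partial_G(q_G)$ for some integer $m$.

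The key step is then to raise this equality to the $(p-1)$-th power inside the commutative ring $\CH(G)$, obtaining
\[
h^{p-1} = m^{p-1} \cdot \partial_G(q_G)^{p-1}
\]
as an equality in $\CH^{p^2-1}(G)$. If $\partial_G(q_G)^{p-1}$ were zero, the right-hand side would vanish, forcing $h^{p-1} = 0$ and contradicting the hypothesis. Hence $\partial_G(q_G)^{p-1} \neq 0$.

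There is no genuine obstacle here; the statement is a formal consequence of Lemma \ref{p+1} together with the fact that every element of $\CH^{p+1}(G)$ is an integer multiple of the generator $\partial_G(q_G)$. The role of the lemma seems to be preparatory: it reduces the later task (presumably Theorem \ref{power}) of proving $\partial_G(q_G)^{p-1} \neq 0$ to the potentially easier task of exhibiting \emph{some} element $h \in \CH^{p+1}(G)$ whose $(p-1)$-th power is nonzero, without committing from the outset to computing with the canonical generator itself.
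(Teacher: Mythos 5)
Your proof is correct and matches the paper's argument exactly: both invoke Lemma \ref{p+1} to write $h$ as a multiple of $\partial_G(q_G)$ and then use multiplicativity of the Chow ring to conclude. The paper simply states the first step and leaves the power computation implicit.
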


\begin{proof}
By Lemma \ref{p+1},
%$\partial_G(q_G)\neq 0$ in $\CH^{p+1}(G)$ and the group $\CH^{p+1}(G)$ is cyclic of %order $p$. Hence
$h$ is a multiple of $\partial_G(q_G)$.
%, and therefore, $\partial_G(q_G)^{p-1}\neq 0$.
\end{proof}

\begin{theorem}[M. Rost]\label{rostth}
\label{power}
Let $D$ be a central division algebra of degree $p$, $G=\gSL_1(D)$. Then $\partial_G(q_G)^{p-1}\neq 0$ in $\CH^{p^2-1}(G)=\CH_0(G)$.
\end{theorem}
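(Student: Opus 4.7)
By Lemma \ref{chut} it suffices to exhibit \emph{any} $h\in\CH^{p+1}(G)$ with $h^{p-1}\neq 0$: such an $h$ is automatically a unit multiple of $\partial_G(q_G)$ by Lemma \ref{p+1}, and Fermat's little theorem then gives $h^{p-1}=\partial_G(q_G)^{p-1}$ in $\CH_0(G)=\Z/p\Z$. First I would reduce to characteristic zero via Section \ref{spec}: lift $D$ to an Azumaya algebra $\cD$ over a complete discrete valuation ring $A$ with residue field $F$ and fraction field $L$ of characteristic $0$, so that $D':=\cD\otimes_A L$ is a degree-$p$ central division $L$-algebra, and obtain a ring homomorphism $\sigma:\CH^*(\gSL_1(D'))\to\CH^*(\gSL_1(D))$ that sends $[e']$ to $[e]$. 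Since $\CH_0(G)=(\Z/p\Z)\cdot[e]$ by Lemma \ref{dimzero}, the non-vanishing of $h^{p-1}$ is preserved under $\sigma$ provided $\sigma$ sends a generator of $\CH^{p+1}(G')$ to a generator of $\CH^{p+1}(G)$; the latter follows from compatibility of the BGQ differential $\partial_G$ with specialization. Henceforth assume $\Char F=0$.

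Next, the plan is to realize $h^{p-1}$ inside a compactification. Take a smooth $G\times G$-equivariant compactification $X$ of $G$ via Example \ref{twist}, obtained by twisting a smooth toroidal compactification of $\gSL_p$ (whose existence is Proposition \ref{torexist}) by the $\gPGL_p$-torsor attached to $D$. By Theorem \ref{equivsplit}, $M(X)$ becomes a sum of Tate motives over $K:=F(S)$, where $S=\SB(D)$; Theorem \ref{semis} computes its Poincar\'e polynomial and, in particular, forces the top Tate summand $\Z(p^2-1)$ to occur with positive multiplicity. On the other hand, Lemma \ref{infinity} tells us that every closed point of $Z:=X\setminus G$ has residue field splitting $D$, which strongly restricts the image of the push-forward $\CH_0(Z)\to\CH_0(X)$.

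The main step is to combine these ingredients with Proposition \ref{diagonal}: the identity $c\Delta_G=\sum_{i=0}^{p-1}h^i\times h^{p-1-i}$ in $\CH^{p^2-1}(G\times G)$ reduces the question to verifying $c\not\equiv 0\pmod p$. I would do this by lifting $\Delta_G$ to a class in $\CH^{p^2-1}(X\times X)$ (for instance via a desingularization of its closure), identifying the Künneth component in bidegree $(p^2-1,0)$ with an $F$-rational generator of the top Tate summand of $M(X_K)$ guaranteed by Theorem \ref{semis}, and observing that its restriction to $G\times G$ cannot be annihilated by contributions from $Z$, by the splitting constraint of Lemma \ref{infinity}.

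The hard part will be this last detection step --- constructing an $F$-rational cycle on $X$ that realizes the top Tate summand seen over $K$ and pinning down its push-forward to $\CH_0(G)$. This is the technical heart of Rost's original argument, reflecting the rigidity of the top motivic summand: a property that the subsequent main theorem of the paper will formalize, but that must be established here by hand.
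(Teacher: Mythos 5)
Your reduction to characteristic zero via Section \ref{spec} (lifting $D$ to an Azumaya algebra over a complete DVR and specializing the Chow ring, with compatibility of $\partial_G$ under specialization) agrees with the paper's Case 3, and the observation that Lemma \ref{p+1} plus Fermat's little theorem upgrades the conclusion of Lemma \ref{chut} to the exact equality $h^{p-1}=\partial_G(q_G)^{p-1}$ is correct, though not needed. However, the core of your argument diverges from the paper's and, as you concede, is not complete.

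The paper's Case 1 is not about equivariant compactifications at all. It works over $L=F(t)$ with the generic torsor $E$ given by $\Nrd=t$, takes an arbitrary smooth compactification $X$ of $E$, and invokes Rost's theory of the basic/special correspondence and the resulting degree invariant $c(X)=\deg(z^{p-1})\in\Z/p\Z$. The decisive external input is Rost's result (from \cite{Rost06}) that \emph{some} norm variety of the symbol $u=(a,b,c)$ satisfies $c(X)\neq 0$, together with the naturality of $c$ under correspondences of degree prime to $p$ (via \cite{MR2220090}), which propagates $c\neq 0$ to all norm varieties of $u$. Restricting the special correspondence class to the trivialized torsor over $L(X)$ and specializing along the purely transcendental extension $L(X)/F$ then produces the desired $h$. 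This is the genuine content of the theorem and it lives entirely outside the motivic-decomposition machinery developed later in the paper.

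Your alternative route has two problems. First, it is at risk of circularity: the tools you want to bring to bear (the existence of the summand $R$ in $M(X)$, the indecomposability of the Rost motive, the uniqueness of decomposition in Theorem \ref{main}/\ref{maincor}) are all proved \emph{after} and \emph{using} Theorem \ref{power}; the only available inputs at this stage are Theorem \ref{equivsplit}, Theorem \ref{semis}, Lemma \ref{infinity}, and Lemma \ref{dimzero}, none of which controls the mod-$p$ coefficient $c$ in Proposition \ref{diagonal}. Second, your ``detection step'' is exactly the hard part and you offer no argument for it: knowing that $M(X_K)$ contains $\Z_{(p)}(p^2-1)$ (from Theorem \ref{semis} applied over the splitting field $K=F(S)$) says nothing about whether an $F$-rational class on $X$ hits that summand nontrivially, and Lemma \ref{infinity} by itself only shows (as in Lemma \ref{dimzero}) that $[e]$ generates $\CH_0(G)\simeq\Z/p$ — it does not show $h^{p-1}$ is a nonzero multiple of $[e]$. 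That nonvanishing is precisely the degree computation encapsulated in $c(X)\neq 0$, which requires Rost's special-correspondence argument and cannot be extracted from the listed auxiliary lemmas.
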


\begin{proof}

\noindent \emph{Case 1}: Assume first that $\ch(F)=0$, $F$ contains a primitive $p$-th root of unity and
$D$ is a cyclic algebra, i.e., $D=(a,b)_F$ for some $a,b\in F^\times$.

%The following argument is due to M.~Rost.
Let $c\in F^\times$ be an element  such that the symbol
$$
u:=(a,b,c)\in H^3_{\acute{e}t}(F,\Z/p\Z(3))\simeq H^3_{\acute{e}t}(F,\Z/p\Z(2))
$$
is nontrivial modulo $p$.
Consider a norm variety $X$ of $u$.

Then $u$ defines a \emph{basic correspondence} in the cokernel of the homomorphism
$$
\CH^{p+1}(X)\to\CH^{p+1}(X\times X)
$$
given by the difference of the pull-backs with respect to the projections.
A representative in $\CH^{p+1}(X\times X)$ of the basic correspondence is a {\em special correspondence}.
Let $z \in \CH^{p+1}( X_{F(X)} )$ be its pull-back.
%(see \cite{Rost06}).
The modulo $p$ degree
\[
c(X): = \deg ( z^{p-1} ) \in\Z/p\Z
\]
is independent of the choice of the special correspondence.
The construction of $c(X)$ is natural with respect to morphisms of norm varieties (see \cite{Rost06}).

It is shown in \cite{Rost06} that there is an $X$ such that $c(X)\neq 0$. We claim that
$c(X')\neq 0$ for every norm variety $X'$ of $u$.
%We prove the claim using the following argument is due to M.~Rost.
As $F(X')$ splits $u$ and
$X$ is $p$-generic, $X$ has a closed point over $F(X')$ of degree prime to $p$, or equivalently, there is
a prime correspondence $X'\corr X$ of multiplicity prime to $p$.
%Taking its smooth compactification,
Resolving singularities,
%of the graph of this morphism
we get a smooth complete variety $X''$ together with the two morphisms
$f:X''\to X$ of degree prime to $p$ and $g:X''\to X'$.
It follows by \cite[Corollary 1.19]{MR2220090} that $X''$ is a norm variety of $u$.
Moreover, $c(X'')=\deg(f)c(X)\neq 0$ in $\Z/p\Z$. As  $c(X'')=\deg(g)c(X')$,
$c(X')$ is also nonzero. The claim is proved.

Let $X$ be a smooth compactification of the $G$-torsor $E$ given by the equation $\Nrd=t$ over the rational function field $L=F(t)$
given by a variable $t$. By the above, since $\{a,b,t\}\ne0$,
we have an element $z \in \CH^{p+1}( X_{L(X)} )$ such that $\deg ( z^{p-1} )\neq 0$
in $\Z/p\Z$. The torsor $E$ is trivial over $L(X)$, i.e. $E_{L(X)}\simeq G_{L(X)}$. Then the restriction of $z$ to the
torsor gives an element $y\in \CH^{p+1}(G_{L(X)})$ with $y^{p-1}\neq 0$. The field extension $L(X)/F$
is purely transcendental. By Section \ref{spec} and Lemma \ref{dimzero}, every specialization homomorphism $\sigma: \CH^{p^2-1}(G_{L(X)})\to \CH^{p^2-1}(G)$ is an isomorphism
taking the class of the identity to the class of the identity.
Specializing, we get an element $h\in \CH^{p+1}(G)$ with $h^{p-1}\neq 0$.
It follows from Lemma \ref{chut} that $\partial_G(q_G)^{p-1}\neq 0$.

\medskip

\noindent \emph{Case 2}: Suppose that $\ch(F)=0$ but $F$ may not contain $p$-th roots of unity and $D$ is an arbitrary division algebra
of degree $p$ (not necessarily cyclic). There is a finite field extension $K/F$ of degree prime to $p$ containing
a primitive $p$-th root of unity and such that the algebra $D\tens_F K$ is cyclic (and still nonsplit). By Case 1, $\partial_G(q_{G})^{p-1}_K\neq 0$
over $K$.
%Taking the norm map for the extension $E/F$ we get
Therefore $\partial_G(q_G)^{p-1}\neq 0$.

\medskip

\noindent \emph{Case 3}:  $F$ is an arbitrary field. Choose a field $L$ of characteristic zero and a central simple algebra $D'$
of degree $p$ over $L$ as in Section \ref{spec} and let $G'=\gSL_1(D')$. By Case 2, there is an element $h'\in \CH^{p+1}(G')$
such that $(h')^{p-1}\neq 0$. Applying a specialization $\sigma$ (see Section \ref{spec}), we have $h^{p-1}\neq 0$ for $h=\sigma(h')$.
By Lemma \ref{chut} again, $\partial_G(q_G)^{p-1}\neq 0$.
\end{proof}

Let $D$ be a central division algebra of degree $p$ over $F$ and $X$ a smooth compactification of $G$.
Let $\bar h\in \CH^{p+1}(X)$ be an element such that $\bar h|_{G}=\partial_G(q_G)\in \CH^{p+1}(G)$.
Let $i=0,1,\dots, p-1$. The element $\bar h^i$ defines the following two morphisms of Chow motives:
\[
f_i: M(X)\to \Z((p+1)i),\quad\quad g_i:\Z((p+1)(p-1-i))\to M(X).
\]
Let
\[
R=\Z\oplus \Z(p+1)\oplus \Z(2p+2)\oplus\cdots\oplus\Z(p^2-1).
\]
We thus have the following two morphisms:
\[
f: M(X)\to R,\quad\quad g:R\to M(X).
\]
The composition $f\circ g$ is $c$ times the identity, where $c=\deg\bar h^{p-1}$. As $c$
is prime to $p$ by Theorem \ref{power}, switching to the {\em Chow motives with coefficients in $\Z_{(p)}$},
we have a decomposition
\begin{equation}\label{decom}
M(X)= R\oplus N
\end{equation}
%over $\Z_{(p)}$
for some motive $N$.

\section{The category of $D$-motives}\label{dmotives}

Let $D$ be a central simple algebra of prime degree $p$ over $F$.
For a field extension $L/F$, let $N^D_i(L)$ be the subgroup
of the Milnor $K$-group $K^M_i(L)$ generated by the norms from finite field extensions
of $L$ that split the algebra $D$.

Consider the
Rost cycle module (see \cite{Rost98a}):
\[
L\mapsto K^D_*(L):= K^M_*(L)/N^D_*(L),
\]
and the corresponding cohomology theory with the ``Chow groups"
\[
\CH_D^i(X):=A^i(X,K^D_i).
\]
Note that $\CH_D^i(X)=0$ if $D$ is split over $F(x)$ for all points $x\in X$.

Let $S=\SB(D)$ be the Severi-Brauer variety of right ideals of $D$ of dimension $p$. We have $\dim S=p-1$.
%the function field of the Severi-Brauer variety $\SB(D)$ (the ($p-1$)-dimensional variety of %right ideals of $D$ of dimension $p$), because $F(\SB(D))$ splits $D$.
%Let $S=\SB(D)$.

\begin{lemma}
\label{lemma}
For a variety $X$ over $F$, the group
%$\CH_D^i(X)$
$\CH_D(X)$
is naturally isomorphic to the cokernel of the push-forward
homomorphism
%$\pr_*:\CH^{i+p-1}(X\times S)\ra \CH^i(X)$
$\pr_*:\CH(X\times S)\ra \CH(X)$
given by the projection $\pr:X\times S\to X$.
\end{lemma}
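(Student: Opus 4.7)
The plan is to construct the natural isomorphism from the surjective morphism of Rost cycle modules $K^M_* \twoheadrightarrow K^D_* = K^M_*/N^D_*$ and to identify its kernel with $\Im(\pr_*)$ in each codimension. Componentwise surjectivity of the induced maps of Gersten complexes yields a surjective homomorphism $\phi \colon \CH^j(X) \twoheadrightarrow \CH^j_D(X)$ for every $j$, with naturality in $X$ automatic from this construction. The content is therefore to verify $\ker\phi = \Im(\pr_*)$.

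First I would analyse $\Im(\pr_*)$. Since $\pr$ is proper of relative dimension $p - 1$, one has $\pr_* \colon \CH^{j+p-1}(X \times S)\to\CH^j(X)$. For an integral subvariety $V \subset X \times S$ of the appropriate codimension, either $\pr(V)$ has strictly smaller dimension than $V$ (so $\pr_*[V] = 0$), or $V$ maps generically finitely onto an integral codim-$j$ subvariety $W \subset X$ with $\pr_*[V] = [F(V) : F(W)] \cdot [W]$. In the latter case $V \subset W \times S$, so the generic point of $V$ is a closed point of the generic fiber $\SB(D_{F(W)})$ over $W$; hence $[F(V) : F(W)]$ is the degree of a finite extension of $F(W)$ splitting $D$, and so lies in $N^D_0(F(W))$. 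Conversely, every element of $N^D_0(F(W))$ is realised as such a degree by taking the closure in $X \times S$ of a closed point of $\SB(D_{F(W)})$ with prescribed splitting residue field. Consequently $\Im(\pr_*)$ in codimension $j$ is the image in $\CH^j(X)$ of the subgroup of cycles $\bigoplus_{W \in X^{(j)}} N^D_0(F(W)) \cdot [W]$. This already forces $\phi \circ \pr_* = 0$, because each such generator vanishes already at the chain level in $\bigoplus_{x \in X^{(j)}} K^D_0(F(x))$.

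For the reverse inclusion $\ker\phi \subset \Im(\pr_*)$ I would lift $\alpha \in \ker\phi$ to a cycle $\tilde\alpha \in \bigoplus_{x \in X^{(j)}} \Z$. The vanishing of $\alpha$ in $\CH^j_D(X)$ means that the termwise reduction $\bar{\tilde\alpha} \in \bigoplus K^D_0(F(x))$ equals $\partial_D(\bar u)$ for some $\bar u$ in the previous Gersten term. Lifting $\bar u$ along the componentwise surjection $K^M_1 \twoheadrightarrow K^D_1$ to $u \in \bigoplus_{x \in X^{(j-1)}} K^M_1(F(x))$ and using that $K^M_* \to K^D_*$ is compatible with the Gersten differentials, I obtain that $\partial_M(u) \in \bigoplus_{x \in X^{(j)}} \Z$ reduces modulo $N^D_0$ to $\bar{\tilde\alpha}$. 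Hence $\tilde\alpha - \partial_M(u) \in \bigoplus N^D_0(F(x))$ as a cycle, and $\alpha = [\tilde\alpha - \partial_M(u)]$ lies in $\Im(\pr_*)$ by the preceding paragraph.

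The main technical point requiring care is the realizability claim just used: that every element of $N^D_0(F(W))$ is achieved as the degree of a closed point of $\SB(D_{F(W)})$. Since $D$ has prime degree $p$, the index of $D_{F(W)}$ is either $1$ or $p$, and correspondingly $N^D_0(F(W))$ is either $\Z$ or $p\Z$; in either case the subgroup is generated by the degree of a single closed point of $\SB(D_{F(W)})$ (a rational point or a point of minimal degree $p$), and all desired multiples then arise from sums of closures in $X \times S$ of finitely many such closed points.
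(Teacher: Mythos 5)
Your proof is correct and follows essentially the same two-step structure as the paper's: show that the natural surjection $\CH(X)\twoheadrightarrow\CH_D(X)$ factors through $\Coker(\pr_*)$, and conversely that the kernel of $\CH(X)\to\CH_D(X)$, generated by cycles with $N^D_0$-coefficients, lies in the image of $\pr_*$. The paper obtains the first inclusion more quickly from the vanishing $\CH_D(X\times S)=0$ rather than by your cycle-by-cycle analysis of $\pr_*$, but the essential content---identifying the kernel via the Gersten complex and realizing the generators of $N^D_0(F(W))$ as degrees of closed points of $S_{F(W)}$, which is where the prime-degree hypothesis enters---is the same.
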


\begin{proof}
The composition
\[
\CH(X\times S)\xra{\pr_*} \CH(X) \ra  \CH_D(X)
\]
factors through the trivial group $\CH_D(X\times S)$ and therefore, is zero. This defines
a surjective homomorphism
\[
\alpha: \Coker(\pr_*)\to \CH_D(X).
\]
\begin{comment}
Let $L/F$ be a finite field extension splitting the algebra $D$. In the commutative diagram
\[
\xymatrix{
\CH(X_L\times S_L) \ar[r]^-{\pr_{L*}}\ar[d] & \CH(X_L) \ar[d] \\
\CH(X\times S) \ar[r]^-{\pr_*} & \CH(X)
}
\]
the vertical maps are the push-forward homomorphisms for the morphisms $X_L\times S_L\ra X_L$ and
$X_L \ra X$ respectively, and the top map is surjective as $S_L$ has a rational point over $L$. Therefore, the image of
$\CH(X_L)$ in $\Coker(\pr_*)$ is trivial. It follows that the natural homomorphism $\CH(X)\ra \Coker(\pr_*)$
factors through a well defined homomorphism
\[
\beta:\CH_D(X)\ra \Coker(\pr_*).
\]
It is straightforward that $\alpha$ and $\beta$ are isomorphisms inverse to each other.
\end{comment}
The inverse map
%$\beta$
is obtained by showing that the quotient map
$\CH(X)\to\Coker(\pr_*)$ factors through $\CH_D(X)$.

The kernel of the homomorphism $\CH(X)\to\CH_D(X)$ is generated by $[x]$ with $x\in X$ such that the algebra $D_{F(x)}$ is split and by $p[x]$ with arbitrary $x\in X$.
The fiber of $\pr$ over $x$ has a rational point $y$ in the first case and a degree $p$ closed point $y$ in the second. The generators are equal to $\pr_*([y])$ in both cases.
It follows that they vanish in $\Coker\pr_*$.
\begin{comment}
The kernel of the homomorphism $\CH(X)\to\CH_D(X)$ is generated by $p\CH(X)$ and the classes of the prime cycles $Y\subset X$ such that the algebra $D_{F(Y)}$ is split.
Let $L/F$ be a degree $p$ field extension splitting the algebra $D$. In the commutative diagram
\[
\xymatrix{
\CH(X_L\times S_L) \ar[r]^-{\pr_{\!L*}}\ar[d] & \CH(X_L) \ar[d] \\
\CH(X\times S) \ar[r]^-{\pr_*} & \CH(X)
}
\]
the vertical maps are the push-forward homomorphisms for the morphisms $X_L\times S_L\ra X_L$ and
$X_L \ra X$ respectively, and the top map is surjective as $S_L$ has a rational point over $L$. Therefore, the image of
$\CH(X_L)$ in $\Coker(\pr_*)$ is trivial. It follows that the image of $p\CH(X)$ in $\Coker\pr_*$ is trivial.

Since the variety $S_{F(Y)}$ has a rational point,
the class $[Y]\in\CH(Y)$ is in the image of the push-forward
$\CH(Y\times S)\to\CH(Y)$.
The commutative square of push-forwards
\[
\xymatrix{
\CH(Y\times S) \ar[r]^-{}\ar[d] & \CH(Y) \ar[d] \\
\CH(X\times S) \ar[r]^-{\pr_*} & \CH(X)
}
\]
tell us then that the image of $[Y]$ in $\Coker\pr_*$ is trivial, too.
\end{comment}
\end{proof}

%\begin{remark}
%We have shown in the proof that for $L/F$
%\end{remark}

Let $G=\gSL_1(D)$.

\begin{corollary}\label{isog}
The natural map $\CH^i(G)\to\CH_D^i(G)$ is an isomorphism for all $i>0$.
\end{corollary}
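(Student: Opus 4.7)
The strategy is to combine Lemma~\ref{lemma} with a generic-fiber/localization argument. Lemma~\ref{lemma} identifies $\CH_D^i(G)$ with the cokernel of the projection pushforward $\pr_*\colon\CH^{i+p-1}(G\times S)\to\CH^i(G)$, making the natural map $\CH^i(G)\to\CH_D^i(G)$ the cokernel projection, hence surjective. So one just needs to prove that $\pr_*=0$ on $\CH^{i+p-1}(G\times S)$ for every $i>0$.

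The plan is to prove by induction on $\dim Z$ the following stronger statement: for every closed subvariety $Z\subseteq S$ and every $i>0$, the proper pushforward
\[
(\pr|_{G\times Z})_*\colon\CH^{i+\dim Z}(G\times Z)\to\CH^i(G)
\]
is zero; the case $Z=S$ then recovers the corollary. The key geometric input is that $F(Z)$ splits $D$ for every closed subvariety $Z\subseteq S$: the generic point of $Z$ is a rational point of $S_{F(Z)}=\SB(D_{F(Z)})$, so $D_{F(Z)}$ splits. Consequently $G_{F(Z)}\cong\gSL_{p,F(Z)}$, and by Example~\ref{split}, $\CH^j(G_{F(Z)})=0$ for all $j>0$.

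For the inductive step, a class $\gamma\in\CH^{i+\dim Z}(G\times Z)$ (reducing to $Z$ irreducible by decomposing its support into components) restricts via flat pullback to $\CH^{i+\dim Z}(G_{F(Z)})=\varinjlim_V\CH^{i+\dim Z}(G\times V)$, which is zero since $i+\dim Z>0$. By the localization exact sequence there is a proper closed $Z'\subsetneq Z$ with $\gamma=j_*(\gamma')$ for some $\gamma'\in\CH^{i+\dim Z'}(G\times Z')$, where $j\colon G\times Z'\hookrightarrow G\times Z$ is the closed immersion; functoriality of proper pushforward gives $(\pr|_{G\times Z})_*(\gamma)=(\pr|_{G\times Z'})_*(\gamma')=0$ by induction. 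The base case $\dim Z=0$ is immediate, as $G\times Z=\gSL_{p,F(Z)}$ has vanishing Chow groups in positive codimension. The main obstacle is only minor bookkeeping: one must check that flat pullback along the open immersions $G\times V\hookrightarrow G\times Z$ preserves codimension, so that the single hypothesis $i>0$ is precisely what forces the generic-fiber restriction to vanish regardless of $\dim Z$—no genuinely difficult step arises once Lemma~\ref{lemma}, the splitting of $D$ over residue fields of subvarieties of $\SB(D)$, and Suslin's computation $\CH(\gSL_p)=\Z$ are assembled.
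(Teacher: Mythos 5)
Your proof is correct, but it takes a genuinely different route from the paper. The paper's argument is a two-line deduction: by Suslin (\cite[Theorem 4.2]{Suslin91a}) the pull-back $\CH^*(S)\to\CH^*(G\times S)$ is an isomorphism, so $\CH^j(G\times S)=0$ for $j>\dim S=p-1$, and hence the map $\pr_*\colon\CH^{i+p-1}(G\times S)\to\CH^i(G)$ in Lemma~\ref{lemma} has trivial source whenever $i>0$. Your proof replaces this citation of the full computation of $\CH^*(G\times S)$ by a Noetherian induction over closed subvarieties $Z\subseteq S$, using at each stage only that the generic point of $Z$ is a point of $\SB(D)$ (hence splits $D$), together with the much weaker input that $\CH^{>0}(\gSL_p)=\Z$ over a splitting field (Example~\ref{split}). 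This is more elementary and more self-contained---it does not require knowing that $G\times S\to S$ induces an isomorphism on Chow rings---and it makes explicit why only $\CH^{>0}$ of the split group and the fact that residue fields of points of $S$ split $D$ are actually needed. The price is length: the localization/generic-fiber bookkeeping you carry out is precisely what Suslin's theorem packages away. One small imprecision to fix in a write-up: in the base case $\dim Z=0$, a zero-dimensional $Z\subseteq S$ is a disjoint union of spectra of residue fields of closed points, so one should reduce to $Z$ irreducible before writing $G\times Z\cong\gSL_{p,F(Z)}$ (as you implicitly do in the inductive step).
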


\begin{proof}
The algebra $D$ is split over $S$. More precisely, $D_X=\End_X(I^\vee)$ for the rank $p$
canonical vector bundle $I$ over $S$
%(see \cite{Shinder}).
(see \cite[Lemma 2.1.4]{MR2942107}).
By \cite[Theorem 4.2]{Suslin91a}, the pull-back homomorphism
$\CH^*(S)\to \CH^*(G\times S)$ is an isomorphism. Therefore, $\CH^j(G\times S)=0$ if $j>p-1=\dim(S)$.
\end{proof}

Let $X$ be a smooth compactification of $G$. Write $X^k=X\times X\times\dots\times X$ ($k$ times).

\begin{lemma}\label{restriction}
The restriction homomorphism
$\CH_D^*(X^k)\to \CH_D^*(G^k)$ is an isomorphism.
\end{lemma}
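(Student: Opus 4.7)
The plan is to show that the restriction map is already an isomorphism at the level of Rost cycle complexes computing $\CH_D^*=A^*(-,K^D_*)$; the statement on cohomology is then immediate.

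First I would observe that $X^k\setminus G^k=\bigcup_{i=1}^k\pi_i\inv(Z)$, where $Z:=X\setminus G$ and $\pi_i\colon X^k\to X$ is the $i$-th projection. Take any scheme-theoretic point $y\in X^k\setminus G^k$; then $\pi_i(y)\in Z$ for some $i$, and the projection gives a field inclusion $F(\pi_i(y))\hookrightarrow F(y)$. By Lemma \ref{infinity}, $D$ is split by $F(\pi_i(y))$, hence also by $F(y)$. I would then note that whenever $D$ splits over a field $L$, one has $K^D_q(L)=0$ for every $q$, because the trivial extension $L/L$ is a degree-one splitting extension and its norm map is the identity, so $N^D_q(L)=K^M_q(L)$. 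Thus $K^D_q(F(y))=0$ for all $y\in X^k\setminus G^k$ and all $q$.

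Next I would compare the Rost complexes
\[
C^p(X^k,K^D_q)=\bigoplus_{y\in(X^k)^{(p)}}K^D_{q-p}(F(y)),\qquad
C^p(G^k,K^D_q)=\bigoplus_{y\in(G^k)^{(p)}}K^D_{q-p}(F(y)),
\]
connected by the pull-back $j^*$ along the open immersion $j\colon G^k\hookrightarrow X^k$. By the previous paragraph, each summand of $C^p(X^k,K^D_q)$ indexed by a point $y\notin G^k$ is zero. Since $G^k$ is open in $X^k$, the codimension-$p$ points of $X^k$ lying in $G^k$ coincide with the codimension-$p$ points of $G^k$. Hence $j^*$ is an isomorphism of complexes, and taking cohomology gives the desired isomorphism $\CH_D^*(X^k)\iso\CH_D^*(G^k)$.

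There is no real obstacle here once Lemma \ref{infinity} is in hand; the only mild point to keep in mind is that the differentials of the Rost complex automatically match under $j^*$ because $j^*$ is a morphism of cycle complexes by construction, so no separate compatibility check is needed.
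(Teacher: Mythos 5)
Your proof is correct and takes essentially the same approach as the paper: both arguments reduce to the observation (via Lemma \ref{infinity} and the projections) that $K^D_*$ vanishes on the residue fields of all points of $X^k\setminus G^k$. The only cosmetic difference is that you establish the isomorphism directly at the level of Rost cycle complexes, whereas the paper phrases the same vanishing via the localization sequence $\CH_D(Z)\to\CH_D(X^k)\to\CH_D(G^k)\to 0$ with $\CH_D(Z)=0$.
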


\begin{proof}
Let $Z=X^k\setminus G^k$. By Lemma \ref{infinity}, the residue field of every point in $Z$ splits $D$, hence
$\CH_D(Z)=0$. The statement follows from the exactness of
the localization sequence
\[
\CH_D(Z)\to \CH_D(X^k)\to\CH_D(G^k)\to 0. \qedhere
\]
\end{proof}

It follows from Lemma \ref{restriction} and Corollary \ref{isog} that $\CH_D^i(X)\simeq \CH^i(G)$ for $i>0$.

Consider the category of motives of smooth complete varieties over $F$  associated to the cohomology theory
%$A^*(-, K_*^D)$
$\CH^*_D(X)$
(see \cite{NZ06}).
Write $M^D(X)$ for the
motive of a smooth complete variety $X$. We call $M^D(X)$ the \emph{$D$-motive of $X$}.
Recall that the group of morphisms between $M^D(X)$ and $M^D(Y)$ for $Y$ of pure dimension $d$ is equal to
$\CH_D^d(X\times Y)$. Let $\Z^D$ the motive of the point $\Spec F$.

Recall that we write $M(X)$ for the usual Chow motive of $X$. We have a functor $N\mapsto N^D$ from the category of Chow motives to the category
of $D$-motives.

\begin{proposition}
\label{dirsum}
%\marginpar{\tt check!}
Let $N$ be a Chow motive. Then $N^D=0$ if and only if $N$ is isomorphic to a direct summand of $N\tens M(S)$.
\end{proposition}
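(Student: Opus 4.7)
My plan is to prove both directions using Lemma \ref{lemma} and the key observation that $M^D(S)=0$. For the ``if'' direction, I would first note that every residue field of $S=\SB(D)$ splits $D$: any point $s\in S$ gives $S$ a rational point over $F(s)$, and a Severi-Brauer variety has a rational point over $L$ exactly when $D\otimes L$ is split. The same then holds for every residue field of $S\times S$ by projection, so by the remark after Lemma \ref{lemma} we have $\CH_D^*(S\times S)=0$; in particular $\id_{M^D(S)}=0$, i.e., $M^D(S)=0$. Since $N\mapsto N^D$ is a tensor functor, $(N\otimes M(S))^D=N^D\otimes M^D(S)=0$, and a direct summand of the zero motive vanishes.

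For the ``only if'' direction, I would realize $N$ as a direct summand $(X,e)$ of $M(X)$ for a smooth projective variety $X$, via structural maps $i\colon N\hookrightarrow M(X)$ and $q\colon M(X)\twoheadrightarrow N$ with $qi=\id_N$, $iq=e$ (Tate shifts, if present, are preserved throughout). The hypothesis $N^D=0$ means that $e\in\CH^{\dim X}(X\times X)$ vanishes in $\CH_D^{\dim X}(X\times X)$, so by Lemma \ref{lemma} there exists $\alpha\in\CH^{\dim X+\dim S}(X\times X\times S)$ with $e=\pr_*(\alpha)$, where $\pr\colon X\times X\times S\to X\times X$ is the projection. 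I would interpret $\alpha$ as a morphism $\widetilde\alpha\colon M(X)\to M(X)\otimes M(S)$ and write $p\colon M(X)\otimes M(S)\to M(X)$ for the morphism of motives induced by the projection $X\times S\to X$. By the very definition of composition of correspondences, $p\circ\widetilde\alpha=\pr_*(\alpha)=e$ in $\End(M(X))$.

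I would then set
\[
\iota:=(q\otimes\id_{M(S)})\circ\widetilde\alpha\circ i\colon N\to N\otimes M(S),\qquad \pi:=q\circ p\circ(i\otimes\id_{M(S)})\colon N\otimes M(S)\to N,
\]
so that, using $iq=e$, one has $\pi\circ\iota = q\circ p\circ(e\otimes\id_{M(S)})\circ\widetilde\alpha\circ i$. The crucial identity, whose verification is the only substantive step, is
\[
p\circ(e\otimes\id_{M(S)})\circ\widetilde\alpha = e\quad\text{in }\End(M(X)).
\]
In cycle terms this reads $\pr_*\bigl((e\otimes 1)\cdot_X\alpha\bigr)=e\circ\pr_*(\alpha)=e\circ e=e$, where $\cdot_X$ denotes composition in the target $X$-factor; the commutation of $S$-pushforward with $X$-composition is the projection formula, and $\pr_*(\alpha)=e$ is the hypothesis. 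Accepting this, $\pi\circ\iota = q\circ e\circ i = (qi)(qi)=\id_N$, exhibiting $N$ as a direct summand of $N\otimes M(S)$. The only potentially delicate point is the cycle-level verification of the displayed identity; once settled, no further input is needed.
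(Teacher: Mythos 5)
Your proof is correct and takes essentially the same route as the paper's: the "if" direction rests on $M^D(S)=0$, and the "only if" direction uses Lemma \ref{lemma} to lift the projector $e$ of $N=(X,e)$ to a cycle $\alpha$ on $X\times X\times S$ and then checks, via the projection formula, that composing with the projector yields a right splitting of the natural map $N\otimes M(S)\to N$. The paper phrases the splitting as $f_*\big((\rho\otimes\id_S)\circ\theta\circ\rho\big)=\rho$ rather than introducing the maps $i,q$ explicitly, but the underlying computation is the same one you reduce to.
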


\begin{proof}
As $M^D(S)=0$, we have $N^D=0$ if $N$ is isomorphic to a direct summand of $N\tens M(S)$.

Conversely, suppose $N^D=0$. Let $N=(X,\rho)$, where $X$ is a smooth complete variety of pure dimension $d$ and
$\rho\in\CH^d(X\times X)$ is a projector.
By Lemma \ref{lemma}, we have $\rho=f_*(\theta)$ for some
$\theta\in \CH^{d+p-1}(X\times (X \times S))$, where $f:X\times X\times S\ra X\times X$ is the projection. Then
$$
f_*\big((\rho\otimes\id_S)\circ\theta\circ\rho\big)=\rho
$$
and $(\rho\otimes\id_S)\circ\theta\circ\rho$ can be viewed as a morphism $N\to  N\tens M(S)$ splitting on the right the natural morphism $N\tens M(S)\to N$.
\end{proof}

The morphisms $f$ and $g$ in Section \ref{rost} give rise to the morphisms $f^D: M^D(X)\to R^D$ and $g^D: R^D \to M^D(X)$ of $D$-motives.

\begin{proposition}\label{isom}
The morphism $f^D: M^D(X)\to R^D$ is an isomorphism in the category of $D$-motives.
\end{proposition}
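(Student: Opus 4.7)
The plan is to complement the identity $f^D \circ g^D = c \cdot \id_{R^D}$ (already recorded in Section~\ref{rost}, with $c = \deg \bar h^{p-1}$ prime to $p$ by Theorem~\ref{power}) by establishing the reverse composition $g^D \circ f^D = c \cdot \id_{M^D(X)}$ in the $D$-motive category. Since $c$ is invertible in $\Z_{(p)}$, these two relations together will force $f^D$ to be an isomorphism with inverse $c^{-1} g^D$.

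The key step is to compute $g \circ f$ as a correspondence in $\CH^{p^2-1}(X \times X)$. Each $f_k$ is represented by the cycle $\bar h^k$ and each $g_j$ by $\bar h^j$; matching Tate twists in $R$ forces only the pairs $g_{p-1-k} \circ f_k$ to contribute to $g \circ f$, and composing these via external products gives
\[
g \circ f = \sum_{k=0}^{p-1} \bar h^k \times \bar h^{p-1-k}.
\]
On the other hand, $c \cdot \id_{M(X)}$ is represented by $c \Delta_X$. Restricting along the open immersion $G \times G \hookrightarrow X \times X$ sends $\Delta_X$ to $\Delta_G$ and $\bar h^k \times \bar h^{p-1-k}$ to $h^k \times h^{p-1-k}$. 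Proposition~\ref{diagonal} then supplies the identity $c \Delta_G = \sum_k h^k \times h^{p-1-k}$ in $\CH^{p^2-1}(G \times G)$, so the difference $g \circ f - c \Delta_X$ has zero restriction to $G \times G$, hence a fortiori in $\CH_D^{p^2-1}(G \times G)$. By Lemma~\ref{restriction}, the restriction $\CH_D^{p^2-1}(X \times X) \to \CH_D^{p^2-1}(G \times G)$ is an isomorphism, so $g^D \circ f^D = c \cdot \id_{M^D(X)}$ already in $\CH_D^{p^2-1}(X \times X)$, which completes the argument.

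The main obstacle is essentially bookkeeping: one must keep track of indices and Tate twists so that $g \circ f$ emerges as precisely the symmetric K\"unneth-type sum $\sum_k \bar h^k \times \bar h^{p-1-k}$, and one must verify that the open-immersion pullback sends $\Delta_X$ to $\Delta_G$ (which holds by transversality). The substantive content is really that Lemma~\ref{restriction} allows us to test equality of morphisms of $D$-motives on the open subscheme $G \times G$, where Proposition~\ref{diagonal} provides the required K\"unneth-type decomposition of $c \Delta_G$ on the nose.
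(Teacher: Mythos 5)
Your proposal is correct and follows the same route as the paper: write out $g\circ f$ as the K\"unneth-type cycle $\sum_k \bar h^k\times\bar h^{p-1-k}$, use Lemma~\ref{restriction} to reduce the equality $g^D\circ f^D = c\cdot\id_{M^D(X)}$ to the restriction over $G\times G$, invoke Proposition~\ref{diagonal} there, and combine with $f^D\circ g^D = c\cdot\id_{R^D}$ and the invertibility of $c$ (prime to $p$ by Theorem~\ref{power}, hence a unit in $\End(M^D(X))$ since $p\CH_D = 0$). The paper's proof is just a terser version of exactly this argument.
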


\begin{proof}
As $\CH^{p^2-1}_D(X\times X)\simeq \CH^{p^2-1}_D(G\times G)$ by Lemma \ref{restriction}, the composition $g^D\circ f^D$ is multiplication by
$c\in\Z$ from Proposition \ref{diagonal}.
By Theorem \ref{power}, $c$ is not divisible by $p$.
Finally, $p\CH_D(G\times G)=0$.
\end{proof}

If $D$ is a central division algebra, it follows from Proposition \ref{isom} and Corollary \ref{isog} that for every $i>0$,
\begin{equation}\label{chowchow}
\CH^i(G)= \CH^i_D(X)= \CH^i_D(R)=
\left\{
  \begin{array}{ll}
    (\Z/p\Z)h^j, & \hbox{if $i=(p+1)j\leq p^2-1$;} \\
    0, & \hbox{otherwise,}
  \end{array}
\right.
\end{equation}
where $h=\partial_G(q_G)$.

We can compute the Chow ring of $G$.

\begin{theorem}\label{chow2}
Let $D$ be a central division algebra of prime degree $p$, $G=\gSL_1(D)$ and $h=\partial_G(q_G)\in\CH^{p+1}(G)$. Then
  \newcommand{\xqedhere}[2]{%
  \rlap{\hbox to#1{\hfil\llap{\ensuremath{#2}}}}}
\[
\CH(G)=\Z\cdot 1\oplus (\Z/p\Z)h\oplus (\Z/p\Z)h^2\oplus\cdots\oplus (\Z/p\Z)h^{p-1}.
\]
\end{theorem}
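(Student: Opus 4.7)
The plan is to observe that Theorem \ref{chow2} is essentially a repackaging of the computation already carried out in the preceding section, namely equation (\ref{chowchow}), together with the standard fact that $\CH^0(G)=\Z$ and Rost's nonvanishing statement $h^{p-1}\neq 0$ from Theorem \ref{rostth}.

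First I would record that $\CH^0(G)=\Z\cdot 1$. This is immediate because $G=\gSL_1(D)$ is a smooth geometrically integral variety over $F$ (after base change to any splitting field of $D$ it becomes $\gSL_p$, which is connected), so its degree zero Chow group is $\Z$, generated by the fundamental class.

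Next I would quote (\ref{chowchow}), which identifies $\CH^i(G)$ for every $i>0$ with $\CH^i_D(R)$, where $R=\Z\oplus\Z(p+1)\oplus\cdots\oplus\Z(p^2-1)$. Unpacking the right-hand side shows that $\CH^i(G)$ is cyclic of order $p$ when $i=(p+1)j$ for some $j\in\{1,2,\dots,p-1\}$, and is zero otherwise. This already gives the additive structure claimed in the theorem, provided I can identify the generator of $\CH^{(p+1)j}(G)$ with $h^j$ for each such $j$.

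For that identification I would argue as follows. By Theorem \ref{rostth} we have $h^{p-1}\neq 0$ in $\CH^{p^2-1}(G)$. If $h^j$ were zero in $\CH^{(p+1)j}(G)$ for some $j$ with $1\le j\le p-1$, then $h^{p-1}=h^j\cdot h^{p-1-j}=0$, a contradiction. Hence $h^j$ is a nonzero element of the cyclic group $\CH^{(p+1)j}(G)\cong\Z/p\Z$, and so it is a generator. Assembling the codimension-zero piece with these generators yields the direct sum decomposition stated in Theorem \ref{chow2}.

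There is no real obstacle here: every nontrivial ingredient (the identification of the Chow groups of $G$ with those of $R$, the order of $h$, and the nonvanishing of $h^{p-1}$) has already been established in Sections \ref{rost} and \ref{dmotives}. The proof is therefore a short bookkeeping step that combines (\ref{chowchow}), Theorem \ref{rostth}, and the trivial computation of $\CH^0(G)$.
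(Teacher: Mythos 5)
Your argument is exactly what the paper gives in its \emph{first} sentence of the proof of Theorem \ref{chow2}, but it only covers the case in which $G$ admits a smooth compactification — and you do not notice that this is an issue. Equation (\ref{chowchow}), on which your whole reduction rests, is derived from Proposition \ref{isom} and Lemma \ref{restriction}, both of which take as input a smooth compactification $X$ of $G$. Over a perfect field such an $X$ exists by Proposition \ref{torexist} combined with the twisting construction of Example \ref{twist}, so your proof is valid there. But the theorem is stated for an arbitrary field $F$, and over an imperfect field the existence of a smooth compactification of $G$ is not available (it would require a form of resolution of singularities that is open in positive characteristic). So as written, your proof does not establish the theorem.

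The paper handles the general case by a genuinely different route: it introduces the notion of a \emph{$D$-complete} variety, shows that $G$ itself is $D$-complete (using the singular projective hypersurface $\Nrd=t^p$ from the proof of Lemma \ref{infinity}), and enlarges the category of $D$-motives so that $M^D(G)$ makes sense without passing to a smooth compactification. Then Proposition \ref{diagonal} and Theorem \ref{rostth}, applied directly to $G$, produce a decomposition
\[
M^D(G)\simeq \Z_{(p)}^D\oplus\Z_{(p)}^D(p+1)\oplus\cdots\oplus\Z_{(p)}^D(p^2-1),
\]
from which the Chow ring computation follows via Corollary \ref{isog}. To make your proposal correct you would either have to restrict to perfect $F$ (weakening the theorem), or incorporate some substitute for the compactification step — which is precisely the $D$-complete machinery the paper develops. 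Your final identification of the generator of $\CH^{(p+1)j}(G)$ with $h^j$ via nonvanishing of $h^{p-1}$ is fine but redundant, since (\ref{chowchow}) already records the generator as $h^j$.
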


\begin{proof}
If $F$ is a perfect field, $G$ admits a smooth compactification $X$ by Proposition \ref{torexist}. The statement follows from (\ref{chowchow}).
In general, we proceed as follows.

A variety $X$ over $F$ is called \emph{$D$-complete} is there is a compactification $\overline X$ of $X$
such that $D$ is split by the residue field of every point
in $\overline X\setminus X$.
Note that the restriction map $\CH(\overline X\times U)\to \CH(X\times U)$ is an isomorphism for every variety $U$.
By the proof of Lemma \ref{infinity}, $G$ is a $D$-complete variety.

We extend the category of $D$-motives by adding the motives $M^D(X)$ of smooth $D$-complete varieties $X$. If $X$ and $Y$ are two smooth $D$-complete
varieties with $Y$ equidimensional of dimension $d$, we define $\Hom(M^D(X),M^D(Y)):=\CH_D^d(X\times Y)$. The composition homomorphism
\[
\CH_D^d(X\times Y)\tens \CH_D^r(Y\times Z)\to \CH_D^r(X\times Z)
\]
is given by
\[
\alpha\tens\beta\mapsto p_{13 *}\( p_{12}^*(\alpha)\cdot p_{23}^*(\beta)\),
\]
where $p_{ij}$ are the three projections of $X\times Y \times Z$ on $X$, $Y$ and $Z$, and the push-forward map $p_{13 *}$ is defined as the composition
\[
p_{13 *}: \CH_D^{d+r}(X\times Y \times Z)\simeq \CH_D^{d+r}(X\times \overline Y \times Z)\to \CH_D^r(X\times Z).
\]
Here $\overline Y$ is a compactification of $Y$ satisfying the condition in the definition of a $D$-complete variety and the second map
is the push-forward homomorphism for the proper projection $X\times \overline Y \times Z \to X \times Z$.

By Proposition \ref{diagonal} and Theorem \ref{rostth}, the powers of $h=\partial_G(q_G)$ yield the following decomposition of $D$-motives
(with coefficients in $\Z_{(p)}$):
\[
M^D(G)\simeq \Z_{(p)}^D\oplus\Z_{(p)}^D(p+1)\oplus\cdots\oplus\Z_{(p)}^D(p^2-1).
\]
The result follows as $\CH^i(G)=\CH^i_D(G)$ for $i>0$ by Corollary \ref{isog}.
\end{proof}

\section{Motivic decomposition of compactifications of $\gSL_1(D)$}

Let $D$ be a central division $F$-algebra of degree a power of a prime $p$  and $S=\SB(D)$.
We work with motives with $\Z_{(p)}$-coefficients in this section.

\begin{proposition}
\label{smooth Y}
Let $X$ be a connected smooth complete variety over $F$ such that
the motive of $X$ is split over every splitting field of $D$ and $D$ is split over $F(X)$.
Then the motive of $X$ is a direct sum of shifts of the motive
of $S$.
\end{proposition}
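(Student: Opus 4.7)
My strategy is to first show $M^D(X)=0$, whence Proposition \ref{dirsum} will realize $M(X)$ as a direct summand of $M(X\times S)=M(X)\otimes M(S)$; then I decompose $M(X\times S)$ into shifts of $M(S)$ by descent from the generic fiber over $S$; and finally I conclude via Krull--Schmidt and indecomposability of $M(S)$. The vanishing of $M^D(X)$ uses the hypothesis that $F(X)$ splits $D$: this provides an $F(X)$-point of $S$, hence a rational map $\phi\colon X\dashrightarrow S$. The closure $Z\subset X\times X\times S$ of the image of the rational map $x\mapsto(x,x,\phi(x))$ projects birationally onto the diagonal $\Delta_X\subset X\times X$ under $\pr_{12}$, so $(\pr_{12})_*[Z]=[\Delta_X]$ in $\CH^{\dim X}(X\times X)$. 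Lemma \ref{lemma} then gives $[\Delta_X]=0$ in $\CH^{\dim X}_D(X\times X)$, and since this class represents $\id_{M^D(X)}$, we obtain $M^D(X)=0$. Proposition \ref{dirsum} yields a splitting $M(X\times S)=M(X)\oplus N$ for some motive $N$.

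To decompose $M(X\times S)$, consider the second projection $\pi\colon X\times S\to S$, a smooth projective morphism with generic fiber $X_{F(S)}$. Since $F(S)$ splits $D$, the hypothesis on $X$ gives $M(X_{F(S)})\cong\bigoplus_r\Z_{(p)}(b_r)$ for some integers $b_r$. Both $M(X\times S)$ and $\bigoplus_r M(S)(b_r)$ restrict over $F(S)$ to the same sum of Tate motives $\bigoplus_{r,i}\Z_{(p)}(b_r+i)$. A descent argument using Rost nilpotence for the Severi--Brauer variety $S$ promotes this to an isomorphism over $F$: one lifts the Tate-projectors on the generic fiber to idempotent correspondences on $X\times S$ modulo the Rost-nilpotent ideal, and the resulting projectors cut out the summands $M(S)(b_r)$ of $M(X\times S)$.

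To finish, I invoke the Krull--Schmidt property of Chow motives with $\Z_{(p)}$-coefficients (valid for geometrically split motives) together with the indecomposability of $M(S)$ (a theorem of Karpenko, applicable since $D$ is a central division algebra of prime power degree). Together these imply that every direct summand of a finite direct sum of shifts of $M(S)$ is itself a finite direct sum of shifts of $M(S)$. Applied to the summand $M(X)\subset M(X\times S)\cong\bigoplus_r M(S)(b_r)$, this gives $M(X)=\bigoplus_j M(S)(a_j)$, as required. I expect the main obstacle to be the descent step: producing the global decomposition of $M(X\times S)$ from the Tate decomposition of the generic fiber $X_{F(S)}$ is the technical heart of the argument and relies on Rost nilpotence for $S$ to lift projectors from $F(S)$ to $F$.
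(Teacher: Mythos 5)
Your route diverges from the paper's: the paper reduces to $\F_p$-coefficients via the nilpotence principle (citing \cite[Proposition 3.1]{MR2393083}) and then invokes the theory of upper motives from \cite{upper} directly — the motive of a generically split variety is a sum of shifts of its upper motive $U(X)$, and $U(X)\simeq U(S)=M(S)$. Your step~1 is correct and pleasant: the graph of the rational map $X\dashrightarrow S$, closed up in $X\times X\times S$, pushes forward to $[\Delta_X]$, so by Lemma~\ref{lemma} the identity of $M^D(X)$ vanishes. (It is, however, redundant: since $D$ is split over $F(X)$ and $\Br(X)\hookrightarrow\Br(F(X))$, the Azumaya algebra $D_X$ is split, so $X\times S\to X$ is already a projective bundle and $M(X\times S)\cong\bigoplus_{i=0}^{p-1}M(X)(i)$, exhibiting $M(X)$ as a summand of $M(X\times S)$ with no $D$-motive machinery.)

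The genuine gap is your step~3 — the claim that $M(X\times S)\cong\bigoplus_r M(S)(b_r)$ over $F$. This is not a consequence of "Rost nilpotence for $S$" as you invoke it: nilpotence lets one lift idempotents that lie in the image of $\End(M(X\times S))\to\End(M((X\times S)_{F(S)}))$, but it does not by itself produce rational projectors cutting out shifted copies of $M(S)$; the rationality of the relevant cycles is precisely what has to be established. Worse, the claim in step~3 is essentially the proposition itself in disguise: by the projective bundle decomposition $M(X\times S)\cong\bigoplus_{i=0}^{p-1}M(X)(i)$ together with Krull--Schmidt, "$M(X\times S)$ is a sum of shifts of $M(S)$" is equivalent to "$M(X)$ is a sum of shifts of $M(S)$." So the proposal reduces the statement to an equivalent one without providing the argument. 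The missing technical content is exactly what \cite[Theorem 3.5]{upper} (or the Vishik--Zainoulline motivic splitting lemma) supplies, and that is why the paper's proof is structured around the upper-motive theorem rather than a bare appeal to Rost nilpotence.
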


\begin{proof}
Note that the variety $X$ is {\em generically split}, that is, its motive is split over $F(X)$.
In particular, $X$ satisfies the nilpotence principle, \cite[Proposition 3.1]{MR2393083}.
Therefore, it suffices to prove the result for motives with coefficients in $\F_p$:
any lifting of an isomorphism of the motives with coefficients in $\F_p$ to the coefficients $\Z_{(p)}$
will be an isomorphism since it will become an isomorphism over any splitting field of $D$.

%For the coefficients $\F_p$ we give two different proofs of the result.
For $\F_p$-coefficients, here is the argument.
%
%\begin{proof}[First Proof of Proposition \ref{smooth Y}]
The (isomorphism class of the) upper motive $U(X)$ is
well-defined and, by the arguments as in the proof of
\cite[Theorem 3.5]{upper},
the motive of $X$ is a sum of shifts of $U(X)$.
Besides, $U(X)\simeq U(S)$, cf. \cite[Corollary 2.15]{upper}.
Finally, $U(S)=M(S)$ because the motive of $S$ is indecomposable, \cite[Corollary 2.22]{upper}.
\end{proof}

From now on, the degree of the division algebra $D$ is $p$.
Recall that we work with motives with coefficients in $\Z_{(p)}$.
%In particular,
So, we set
\[
R=\Z_{(p)}\oplus \Z_{(p)}(p+1)\oplus \Z_{(p)}(2p+2)\oplus\cdots\oplus\Z_{(p)}(p^2-1)
\]
now.

\begin{theorem}
\label{main}
Let $F$ be a field, $D$ a central division $F$-algebra of prime degree $p$, $G=\gSL_1(D)$, $X$ a smooth compactification of $G$, and $M(X)$ its Chow motive with $\Z_{(p)}$-coefficients.
Assume that  $M(X)$ is split over every splitting field of $D$ (see Example \ref{twist}).
Then the motive $M(X)$ (over $F$) is isomorphic to the direct sum of $R$ and a direct sum of shifts of $M(S)$.
\end{theorem}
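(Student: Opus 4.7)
The plan is to refine the decomposition $M(X)=R\oplus N$ from (\ref{decom}) by showing that the residual motive $N$ is a direct sum of shifts of $M(S)$. One would like to apply Proposition \ref{smooth Y} directly to $X$, but this is blocked by the fact that $D$ need not split over $F(X)=F(G)$. The workaround will be to apply Proposition \ref{smooth Y} to $X\times S$, whose function field does split $D$, and then transfer the result back to $N$ via the $D$-motive machinery of Section \ref{dmotives} and a Krull--Schmidt argument.

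First I will check that $N^D=0$. The decomposition (\ref{decom}) is realized by the morphisms $f\colon M(X)\to R$ and $g\colon R\to M(X)$ of Section \ref{rost} with $f\circ g=c\cdot\id_R$ and $c$ prime to $p$. Passing to $D$-motives, Proposition \ref{isom} asserts that the induced map $f^D$ is an isomorphism, so the complementary $D$-motive $N^D$ vanishes; Proposition \ref{dirsum} then shows that $N$ is a direct summand of $N\otimes M(S)$. Next I would apply Proposition \ref{smooth Y} to the smooth connected projective variety $X\times S$: its function field contains $F(S)$ and hence splits $D$, and over any splitting field $L/F$ of $D$ the motive $M((X\times S)_L)=M(X_L)\otimes M(S_L)$ is split by hypothesis on $M(X)$ together with $S_L\simeq\P^{p-1}$. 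Proposition \ref{smooth Y} yields that $M(X)\otimes M(S)=M(X\times S)$ is a direct sum of shifts of $M(S)$.

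The final step is a Krull--Schmidt extraction. The motive $M(X\times S)$ is generically split, so the nilpotence principle of \cite[Proposition 3.1]{MR2393083} applies to each of its direct summands; combined with the indecomposability of $M(S)$ (since $D$ is a division algebra of prime degree, cf.\ \cite[Corollary 2.22]{upper}), this yields a Krull--Schmidt theorem for summands of $M(X\times S)$. Decomposing
\[
M(X)\otimes M(S)=R\otimes M(S)\oplus N\otimes M(S),
\]
the left-hand side is a sum of shifts of $M(S)$ by the previous step, and $R\otimes M(S)$ is manifestly so, hence Krull--Schmidt forces $N\otimes M(S)$ to be a direct sum of shifts of $M(S)$ as well; a second application to the summand $N$ of $N\otimes M(S)$ concludes that $N$ itself is such a direct sum. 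I expect the Krull--Schmidt step to be the main obstacle, since it rests on generic splitting and the nilpotence principle---which is precisely why the detour through $X\times S$ is essential, as $X$ alone may not satisfy these properties.
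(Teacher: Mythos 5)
Your proof is correct and follows essentially the same route as the paper's: $M(X)=R\oplus N$ via (\ref{decom}), $N^D=0$ by Proposition \ref{isom}, $N$ is a summand of $N\otimes M(S)$ which is a summand of $M(X\times S)$ by Proposition \ref{dirsum}, $M(X\times S)$ is a sum of shifts of $M(S)$ by Proposition \ref{smooth Y} applied to $X\times S$, and a Krull--Schmidt argument extracts the conclusion. The small differences---spelling out the hypotheses of Proposition \ref{smooth Y} for $X\times S$, justifying Krull--Schmidt via generic splitting and the nilpotence principle rather than citing \cite[Corollary 35]{MR2264459}, and the unnecessary intermediate extraction of $R\otimes M(S)$---are cosmetic.
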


\begin{proof}
%Let $\bar h\in\CH^{p+1}(X)$ be as above.
By (\ref{decom}),
$M(X)=R\oplus N$ for a motive $N$ and by Proposition \ref{isom}, $N^D=0$. It follows from Proposition
\ref{dirsum} that $N$ is isomorphic to a direct summand of $N\tens M(S)$.
In its turn, $N\tens M(S)$ is a direct summand of $M(X\times S)$.
In view of Proposition \ref{smooth Y}, $M(X\times S)$ is a direct sum of shifts of $M(S)$.
By the uniqueness
%\marginpar{\tt references}
of the decomposition \cite[Corollary 35]{MR2264459}
and indecomposability of $M(S)$ \cite[Corollary 2.22]{upper}, the motive $N$ is a direct sum of shifts of $M(S)$.
\end{proof}

%\begin{remark}
%If $X$ is a compactification from Example \ref{twist}, the projection $X\times S\to S$ is a {\em cellular fibration}
%in the sense of \cite[Definition 3.1]{J-invariant} (see Remark \ref{4.3}).
%Therefore, one may apply \cite[Lemma 3.2]{J-invariant} in order to prove that the motive of $X\times S$ with %$\Z$-coefficients is a sum of shifts of the motive of $S$.
%However, since we have no uniqueness of motivic decompositions for $\Z$-coeffisients, we may not conclude that %any direct summand of $M(X\times S)$ is also a sum of shifts of $M(S)$.
%\end{remark}

%\section{Motivic decomposition of compactifications of $\gSL_1(D)$-torsors}

\begin{theorem}\label{maincor}
\label{cor1}
Let $E$ be an $\gSL_1(D)$-torsor and $X$ a smooth
compactification of $E$ such that the motive $M(X)$ is split over every splitting field of $D$ (see Example \ref{twist}).
Then $X$ satisfies the nilpotence principle.
Besides, the motive $M(X)$ is isomorphic to the direct sum of the Rost motive $\cR$ of $X$ and a direct sum of shifts of $M(S)$.
% where $\cR$ is an indecomposable motive such that
%$$
%\cR_{F(X)}\simeq
%\Z_{(p)}\oplus \Z_{(p)}(p+1)\oplus \Z_{(p)}(2p+2)\oplus\cdots\oplus\Z_{(p)}(p^2-1).
%$$
The above decomposition is the unique decomposition of $M(X)$ into a direct sum of indecomposable motives.
\end{theorem}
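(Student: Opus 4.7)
The plan is to establish the nilpotence principle for $X$, extract the Rost motive $\cR$ of $X$ as a direct summand of $M(X)$, and identify the complement as a sum of shifts of $M(S)$. For nilpotence: since $\bar F$ splits $D$, the hypothesis gives that $M(X_{\bar F})$ is a sum of Tate motives, so $X$ is geometrically cellular and the nilpotence principle follows. Simultaneously, over $K = F(S)$, Hilbert~90 yields $H^1(K, \gSL_p) = 1$, so the torsor $E_K$ is trivial and $X_K$ is a smooth compactification of $\gSL_{p, K}$; Theorem~\ref{main} applied over $K$ then gives $M(X_K) \cong R_K \oplus \bigoplus_j M(S_K)(a_j)$, all Tate.

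The Rost motive $\cR$ of $X$ is extracted as a direct summand as follows. The variety $X$ is a norm variety of $s = [D] \cup (c) \in H^3(F, \Z/p\Z(2))$, where $E$ is defined by $\Nrd = c$. By Rost's theorem (Theorem~\ref{rostth}) the modulo-$p$ degree $c(X) = \deg(z^{p-1})$ of a special correspondence $z \in \CH^{p+1}(X \times X)$ is nonzero. Since $c(X) \in \Z_{(p)}^{\times}$, powers of $z$ assemble into a projector $\rho$ on $M(X)$ whose image $\cR := (X, \rho)$ is the Rost motive (coinciding with $R$ after base change to any splitting field of $D$, as confirmed by the first step); this decomposes $M(X) = \cR \oplus N$ in the category of motives with $\Z_{(p)}$-coefficients.

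To identify $N$, I would show $N^D = 0$ via the analog of Proposition~\ref{isom} for $X$: the analogs of Lemmas~\ref{infinity} and~\ref{restriction} and of Proposition~\ref{diagonal} transfer from compactifications of $G$ to those of $E$ (using that $D$ splits at the boundary points of $X$, in analogy with Lemma~\ref{infinity}), whence $M^D(X) \cong \cR^D$ and $N^D = 0$. Proposition~\ref{dirsum} then realizes $N$ as a direct summand of $N \otimes M(S)$, hence of $M(X \times S)$. Since $F(X \times S) \supseteq F(S)$ splits $D$ and $M(X \times S)$ is split over any splitting field of $D$ by hypothesis, Proposition~\ref{smooth Y} gives $M(X \times S) \cong \bigoplus_k M(S)(b_k)$. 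By Krull--Schmidt (via nilpotence and \cite[Corollary~35]{MR2264459}), $N$ itself is a direct sum of shifts of $M(S)$, yielding the claimed decomposition; uniqueness into indecomposables then follows from the indecomposability of $M(S)$ \cite[Corollary~2.22]{upper} and of $\cR$ (whenever $X$ has no closed point of degree prime to $p$). The main obstacle will be isolating $\cR$ as a direct summand over $F$, which rests on the nonvanishing in Rost's Theorem~\ref{rostth}; the remainder of the argument then parallels the proof of Theorem~\ref{main}.
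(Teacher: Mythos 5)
Your central step --- extracting the Rost motive as a summand of $M(X)$ \emph{over $F$} by building a projector from ``powers of a special correspondence $z$'' --- is a genuine gap. The direct construction of Section~\ref{rost}, which produces the splitting~(\ref{decom}), uses a class $\bar h\in\CH^{p+1}(X)$ restricting to $\partial_G(q_G)$ on $G$; it is specific to compactifications of the \emph{group} $G$ and has no analog for a compactification of a nonsplit torsor $E$, since $\CH^{>0}(E)=0$ (Corollary~\ref{cor2}). Your substitute is unjustified and also misreads Theorem~\ref{rostth}: that theorem is a statement about $\partial_G(q_G)^{p-1}\in\CH_0(G)$, not about special correspondences on $X\times X$; the special-correspondence calculus appears only \emph{inside} its proof and requires $\Char F=0$. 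Similarly, your nilpotence argument (``$M(X_{\bar F})$ is split, hence geometrically cellular, hence nilpotence'') is too quick: $X$ is not generically split here, since $D_{F(X)}$ need not be split, so the criterion of \cite[Proposition~3.1]{MR2393083} invoked in Proposition~\ref{smooth Y} does not apply, and geometric splitness alone is not known to suffice. Finally, transferring Proposition~\ref{diagonal} from $G$ to $E$ is asserted but not shown; its proof uses the group operation $s(x,y)=xy^{-1}$ and Lemma~\ref{addit}.

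The paper avoids all of this by reversing the order of extraction. After reducing to $\Z_{(p)}$-coefficients, it gets nilpotence from \cite[Theorem~92.4]{EKM} together with Theorem~\ref{main} applied to $X$ over $F(X)$, where the torsor $E$ trivializes and that theorem is available. Then, passing to $\F_p$-coefficients, it applies Lemma~\ref{9.5} iteratively with $L=F(X)$ to pull the $M(S)$-summands of $M(X_{F(X)})$ down to $F$ one at a time; the \emph{remaining} summand $\cR$ is recognized as the Rost motive because $\cR_{F(X)}\simeq R$, and is indecomposable since every closed point of $X$ has degree divisible by $p$. No direct construction of a Rost projector over $F$, and no analog of Section~\ref{rost} for $E$, is ever needed. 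Your final step (uniqueness from local endomorphism rings of $\cR$ and $M(S)$ plus Krull--Schmidt) does match the paper's.
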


\begin{proof}
By saying that $X$ satisfies the nilpotence principle, we mean that it does it for any coefficient ring, or, equivalently,  for $\Z$-coefficients.
However, since the integral motive of $X$ is split over a field extension of degree $p$, it suffices to check that $X$ satisfies the nilpotence principle for $\Z_{(p)}$-coefficients, where we can simply refer to \cite[Theorem 92.4]{EKM} and Theorem \ref{main} (applied to $X$ over $F(X)$).

It follows that it suffices
to get the motivic decomposition of Theorem \ref{cor1} for $\Z_{(p)}$-coefficients replaced by
$\F_p$-coefficients.
For $\F_p$-coefficients
we use the following modification of \cite[Proposition 4.6]{hypernew}:

\begin{lemma}
\label{9.5}
Let $S$ be a geometrically irreducible variety with the motive satisfying the nilpotence principle and becoming split over an extension of the base field.
Let $M$ be a summand of the motive of some smooth complete variety $X$.
Assume that there exists a field extension $L/F$ and an integer $i\in\Z$ such that the change of field homomorphism
$\Ch(X_{F(S)})\to\Ch(X_{L(S)})$ is surjective and the motive $M(S)(i)_L$ is an indecomposable summand of $M_L$.
Then $M(S)(i)$ is an indecomposable summand of $M$.
\end{lemma}

\begin{proof}
It was assumed in \cite[Proposition 4.6]{hypernew} that the field extension $L(S)/F(S)$ is purely transcendental.
But this assumption was only used to ensure that the change of field homomorphism
$\Ch(X_{F(S)})\to\Ch(X_{L(S)})$ is surjective.
Therefore the old proof works.
\end{proof}

We apply Lemma \ref{9.5} to our $S$ and $X$ (with $L=F(X)$).
First we take $M=M(X)$ and
using Theorem \ref{main}, we   extract from $M(X)$ our first copy of shifted $M(S)$.
Then we apply Lemma \ref{9.5} again, taking for $M$ the complementary summand of$M(X)$.
Continuing this way, we eventually extract from $M(X)$ the same number of (shifted) copies of $M(S)$ as we have by Theorem \ref{main} over $F(X)$.
Let $\cR$ be the remaining summand of $M(X)$.
By uniqueness of decomposition, we have $\cR_{F(X)}\simeq R$ so that $\cR$ is the Rost motive.
It is indecomposable (over $F$), because the degree of every closed point on $X$ is divisible by $p$.

The uniqueness of the constructed decomposition follows by \cite[Theorem 3.6 of Chapter I]{MR0249491}, because the endomorphism rings of $M(S)$ and of $\cR$ are local (see \cite[Lemma 3.3]{snv}).
% for the proof of the statement on $\cR$).
\end{proof}

\begin{remark}
If $X$ is an equivariant toroidal compactification of $\gSL_1(D)$, the number of motives $M(S)$ in the decomposition of Theorem \ref{maincor} is equal to $s(p-1)!-1$, where $s$ is the number of cones of maximal dimension in the fan of the associated toric variety (see Theorem \ref{semis}).
\end{remark}

\begin{example}\label{semenovex}
Let $X$ be the (non-toroidal) equivariant compactification of $\gSL_1(D)$ with $p=3$ considered in \cite{MR2400993}. Since $P_X(t)=t^8 + t^7 + 2t^6 + 3t^5 + 4t^4 + 3t^3 + 2t^2 + t þ + 1$, we have \[
M(X)\simeq \cR\oplus M(S)(1)\oplus M(S)(2)\oplus M(S)(3)\oplus M(S)(4)\oplus M(S)(5).
\]
\end{example}

\begin{example}
Let $X$ be the toroidal equivariant compactification of $\gSL_1(D)$ with $p=3$ considered in Example \ref{sl2} in the split case. We have
\[
M(X)\simeq \cR\oplus M(S)(1)^{\oplus 3}\oplus M(S)(2)^{\oplus 5}\oplus M(S)(3)^{\oplus 7}\oplus M(S)(4)^{\oplus 5}\oplus M(S)(5)^{\oplus 3}.
\]
\end{example}

\begin{corollary}
\label{cor2}
Let $E$ be a nonsplit $\gSL_1(D)$-torsor.
Assume that $\Char F=0$.
Then $\CH(E)=\Z$.
% provided that $\Char F\ne p$.
\end{corollary}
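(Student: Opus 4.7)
My plan is to adapt the proof of Theorem \ref{chow2} from $G$ to the nonsplit torsor $E$, working throughout in the $D$-motive framework of Section \ref{dmotives}; the crucial new input will be the absence of an $F$-rational point on $E$. Since $\Char F=0$, Hironaka's resolution of singularities combined with Example \ref{twist} yields a smooth projective compactification $X$ of $E$ whose motive splits over every splitting field of $D$, and Theorem \ref{maincor} then gives $M(X)_{(p)}\simeq\cR\oplus\bigoplus_j M(S)(a_j)$ with $\cR$ indecomposable. The prime-to-$p$ part of $\CH(E)$ is easy: over a degree $p$ splitting field $K$ of $D$, $E_K\simeq(\gSL_p)_K$ has $\CH(E_K)=\Z$ by Suslin's theorem, and the standard transfer argument gives $\CH(E)\otimes\Z_{(\ell)}=\Z_{(\ell)}$ for all $\ell\ne p$.

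For the $p$-primary part, I would extend the machinery of Section \ref{dmotives} from $G$ to $E$. The analogue of Lemma \ref{infinity} (via the projective closure $\Nrd=ct^p$ of $E=\{\Nrd=c\}$) shows that every residue field on $Z:=X\setminus E$ splits $D$; since $D$ is split over $S$, Hilbert 90 trivializes the $\gSL_p$-torsor $E_S$, yielding $\CH^j(E\times S)=\CH^j(S)=0$ for $j>p-1$. The proofs of Corollary \ref{isog} and Lemma \ref{restriction} then apply verbatim with $E$ in place of $G$, giving, for $i>0$, $\CH^i(E)_{(p)}\cong\CH^i_D(E)\cong\CH^i_D(X)\cong\CH^i_D(\cR)$, where the last isomorphism uses Proposition \ref{dirsum} together with $M^D(S)=0$.

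The main obstacle is to show $\CH^i_D(\cR)=0$ for $i>0$. In Theorem \ref{chow2} the parallel groups $\CH^i_D(G)=\Z/p\cdot h^j$ are \emph{nonzero}, their generators coming from the diagonal decomposition of Proposition \ref{diagonal} combined with the non-vanishing $h^{p-1}=c\,[e]\ne 0$ of Rost's Theorem \ref{power}, which uses critically the identity $e\in G(F)$. For nonsplit $E$, in contrast, every closed point of $E$ has residue degree divisible by $p$: were $L/F$ a prime-to-$p$ extension with $E_L$ split, corestriction would force the symbol $[D]\cup(c)\in H^3(F,\Z/p\Z(2))$ to vanish, splitting $E$. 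Consequently any putative class $h_E^{p-1}\in\CH_0(E)_{(p)}$ must vanish, and the diagonal formula of Proposition \ref{diagonal} for $E$---now built using the action-induced isomorphism $E\times E\simeq G\times E$ in place of group subtraction---degenerates, forcing $\cR^D$ to collapse in positive degrees. Making this degeneration precise inside the extended $D$-motive category of $D$-complete varieties introduced in the proof of Theorem \ref{chow2} is the principal work of the proof.
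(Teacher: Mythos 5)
Your reduction mirrors the paper's: pass to $\Z_{(p)}$-coefficients, use the $D$-motive framework to get $\CH^{>0}(E)_{(p)}\cong\CH^{>0}_D(E)\cong\CH^{>0}_D(\cR)$, and reduce everything to showing $\CH^{>0}_D(\cR)=0$. Up to that point the plan is sound (although the phrase ``Hilbert 90 trivializes the $\gSL_p$-torsor $E_S$'' papers over the fact that Hilbert 90 is a statement over fields, not over the scheme $S$; the vanishing $\CH^j(E\times S)=0$ for $j>p-1$ needs the Suslin-type fibration argument rather than a torsor triviality statement over $S$, but this can be repaired).

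The genuine gap is exactly at the step you label ``the principal work of the proof'': you never actually prove $\CH^{>0}_D(\cR)=0$, and the route you sketch is not the one that works. The diagonal-degeneration idea does not easily yield the vanishing: under the identification $E\times E\simeq G\times E$ the diagonal $\Delta_E$ corresponds to $[e]\times 1$, and the absence of a rational point on $E$ only tells you that this class fails to decompose as a sum of external products along $E$; it does not by itself give any control on the Chow groups of the indecomposable Rost summand $\cR$. What the paper actually does at this point is cite the explicit computation of $\CH^{>0}(\cR)$ from \cite[Appendix RM]{snv} (this is where the $\Char F=0$ hypothesis is used, via that computation), list its generators, and check that each one dies in $\CH_D(\cR)$: the torsion generators of \cite[Proposition SC.21]{snv} vanish in $\CH_D$ by construction, and the remaining generators are norms from degree $p$ splitting fields of $D$ and hence vanish in $\CH_D$ by the very definition of the cycle module $K^D_*$. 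That explicit generator-by-generator argument is the missing ingredient; without it (or a genuine replacement), your proposal stops at the crucial point.
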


\begin{proof}
Since $p\CH^{>0}(E)=0$, it suffices to prove that $\CH^{>0}(E)=0$ for $\Z$-coefficients replaced by $\Z_{(p)}$-coefficients.
Below $\CH$ stands for Chow group with $\Z_{(p)}$-coefficients.

We prove that $\CH(E)=\CH_D(E)$ by the argument of Corollary \ref{isog}.
It remains to show that $\CH^{>0}_D(E)=0$.

Let $X$ be a compactification of $E$ as in Theorem \ref{cor1}.
Since $\CH_D(X)$ surjects onto $\CH_D(E)$ and $\CH_D(S)=0$, it suffices to check that
$\CH^{>0}_D(\cR)=0$.
Actually, we have $\CH_D(\cR)\simeq\CH_D(E)$ (see Section \ref{dmotives}). Moreover, the $D$-motive of $\cR$ is isomorphic to $M^D(E)$.

The Chow group $\CH^{>0}(\cR)$ has been computed in \cite[Appendix RM]{snv}
(the characteristic assumption is needed here).
The generators of the torsion part, provided in \cite[Proposition SC.21]{snv}, vanish in $\CH_D(\cR)$ by construction.
The remaining generators are norms from a degree $p$ splitting field of $D$ so that they vanish in $\CH_D(\cR)$, too.
Hence $\CH^{>0}_D(\cR)=0$ as required.
\end{proof}

%\bibliographystyle{acm}
%\bibliography{bibfile,nikita}
%\bibliographystyle{amsplain}

\end{document}